\renewenvironment{proof}[1][\proofname] {\par\pushQED{\qed}\normalfont\topsep6\p@\@plus6\p@\relax\trivlist\item[\hskip\labelsep\bfseries#1\@addpunct{.}]\ignorespaces}{\popQED\endtrivlist\@endpefalse}
\newtheorem{theorem}{\bf Theorem}[section]
\newtheorem{lemma}[theorem]{\bf Lemma}
\newtheorem{conjecture}[theorem]{\bf Conjecture}
\theoremstyle{definition}
\newtheorem{definition}[theorem]{\bf Definition}
\def\eps{\varepsilon}
\def\vx{\textbf{x}}
\def\vy{\textbf{y}}
\def\vz{\textbf{z}}
\def\cG{\mathcal{G}}
\def\cC{\mathcal{C}}
\def\cD{\mathcal{D}}
\def\ex{\mathrm{ex}}
\title{Disproof of a conjecture of Erd\H os and Simonovits on the Tur\'an number of graphs with minimum degree $3$}
\author{Oliver Janzer\thanks{Department of Mathematics, ETH Z\"urich, Switzerland. Email: \href{mailto:oliver.janzer@math.ethz.ch} {\nolinkurl{oliver.janzer@math.ethz.ch}}.
Research supported by an ETH Z\"urich Postdoctoral Fellowship 20-1 FEL-35.}}
\date{}
\begin{document}
	
\maketitle
	
\begin{abstract}
	In 1981, Erd\H os and Simonovits conjectured that for any bipartite graph $H$ we have $\mathrm{ex}(n,H)=O(n^{3/2})$ if and only if $H$ is $2$-degenerate. Later, Erd\H os offered 250 dollars for a proof and 500 dollars for a counterexample. In this paper, we disprove the conjecture by finding, for any $\varepsilon>0$, a $3$-regular bipartite graph $H$ with $\mathrm{ex}(n,H)=O(n^{4/3+\varepsilon})$.
\end{abstract}

\section{Introduction}

For a graph $H$ and a positive integer $n$, the Tur\'an number (or extremal number) $\ex(n,H)$ denotes the maximum possible number of edges in an $n$-vertex graph which does not contain $H$ as a subgraph. The celebrated Erd\H os--Stone--Simonovits \cite{ES46,ESi66} theorem states that $\ex(n,H)=(1-\frac{1}{\chi(H)-1}+o(1))\binom{n}{2}$, which determines the asymptotics of $\ex(n,H)$ whenever $H$ is not bipartite. However, finding good bounds for $\ex(n,H)$ for bipartite graphs $H$ is in general very difficult. The order of magnitude of $\ex(n,H)$ is not known even in some very basic cases such as when $H$ is the even cycle $C_{8}$, complete bipartite graph $K_{4,4}$ or the cube $Q_3$. One of the few general results is the following upper bound, due to F\"uredi~\cite{Fu91} (later reproved using different methods by Alon, Krivelevich and Sudakov \cite{AKS03}).

\begin{theorem}[F\"uredi \cite{Fu91} and Alon--Krivelevich--Sudakov \cite{AKS03}] \label{thm:bounded max degree}
	Let $H$ be a bipartite graph in which all vertices in one of the parts have degree at most $r$. Then $\ex(n,H)=O(n^{2-1/r})$.
\end{theorem}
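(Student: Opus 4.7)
The plan is to use the dependent random choice method. Write $H$ with bipartition $A \cup B$ so that every vertex of $B$ has degree at most $r$ in $H$, set $h = |V(H)|$, and suppose for contradiction that $G$ is an $n$-vertex graph with $e(G) \geq C n^{2 - 1/r}$ and no copy of $H$, where $C = C(H)$ is a constant to be chosen. The average degree of $G$ then satisfies $d \geq 2C n^{1 - 1/r}$. Sample $v_1, \ldots, v_r \in V(G)$ uniformly and independently (with replacement), and set $T = \bigcap_{i=1}^r N(v_i)$. By convexity,
$$\mathbb{E}[|T|] = \sum_{u \in V(G)} \left(\frac{d(u)}{n}\right)^r \geq n \left(\frac{d}{n}\right)^r \geq (2C)^r.$$
Call an $r$-subset $R \subseteq V(G)$ \emph{bad} if its common neighborhood $N(R) = \bigcap_{v \in R} N(v)$ has fewer than $h$ vertices; since the $v_i$ are independent, $\Pr[R \subseteq T] = (|N(R)|/n)^r$, so the expected number of bad $r$-subsets contained in $T$ is at most $\binom{n}{r}(h/n)^r \leq h^r/r!$. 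Choosing $C = C(H)$ large enough that $(2C)^r - h^r/r! \geq h$, there is a choice of $v_1, \ldots, v_r$ for which $|T|$ minus the number of bad $r$-subsets of $T$ is at least $h$; deleting one vertex from each bad $r$-subset yields a set $U \subseteq V(G)$ of size at least $h$ in which every $r$-subset has at least $h$ common neighbors in $G$.

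Now embed $H$ greedily. First inject $A$ into $U$ arbitrarily, which is possible since $|U| \geq h \geq |A|$. Then process the vertices of $B$ in any order: each $b \in B$ has its already-embedded neighbors forming a set $S \subseteq U$ of size at most $r$; pad $S$ to an $r$-subset $S' \subseteq U$, and note that $N(S) \supseteq N(S')$ with $|N(S')| \geq h$ by construction, so some vertex of $N(S)$ has not yet been used as an image and we place $b$ there. The result is an embedding of $H$ into $G$, contradicting $H$-freeness and establishing $\ex(n,H) < C n^{2 - 1/r}$.

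The conceptual heart of the argument is the construction of the set $U$ in which every $r$-subset has many common neighbors; once it is in hand, the embedding is immediate precisely because the $B$-side has degree at most $r$. The only real calibration is choosing $C$ large enough in terms of $|V(H)|$ and $r$ so that the expected size of $T$ comfortably dominates the expected number of bad $r$-subsets plus $h$, and this is exactly what the two-moment calculation above delivers.
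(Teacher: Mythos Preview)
The paper does not give its own proof of this theorem: it is quoted as a background result with citations to F\"uredi and to Alon--Krivelevich--Sudakov, so there is nothing to compare against. Your argument is the standard dependent random choice proof of Alon--Krivelevich--Sudakov and is correct. The only cosmetic point is the padding step ``pad $S$ to an $r$-subset $S'\subseteq U$'', which tacitly uses $|U|\ge r$; this holds because $|U|\ge h=|V(H)|$ and one may assume $h\ge r$ (otherwise every vertex of $B$ has degree strictly less than $r$ and one can simply run the same argument with a smaller exponent, or add isolated vertices to $H$ without affecting the Tur\'an number). F\"uredi's original proof proceeds differently, via a hypergraph argument rather than dependent random choice, but the version you wrote is the one most commonly presented today.
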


Let us call a graph $r$-degenerate if each of its subgraphs has minimum degree at most $r$. Erd\H os conjectured that the following more general version of Theorem \ref{thm:bounded max degree} is also true.

\begin{conjecture}[Erd\H os \cite{Erd67}] \label{conj:degenerate}
	Let $H$ be an $r$-degenerate bipartite graph. Then $\ex(n,H)=O(n^{2-1/r})$.
\end{conjecture}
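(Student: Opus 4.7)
The plan is to try to generalize the dependent random choice (DRC) proof of Theorem~\ref{thm:bounded max degree} due to Alon--Krivelevich--Sudakov. Fix an $r$-degenerate bipartite graph $H$ on $h$ vertices together with an ordering $v_1, \dots, v_h$ of $V(H)$ in which every $v_i$ has at most $r$ neighbors among $v_1, \dots, v_{i-1}$; call these the back-neighbors of $v_i$. Given an $n$-vertex host graph $G$ with $e(G) \geq C_H \cdot n^{2-1/r}$ for a large enough constant $C_H$, the goal is to construct an embedding $\varphi \colon V(H) \to V(G)$.

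First, I would use DRC to locate a set $U \subseteq V(G)$ of size roughly $n^{1/r}$ whose $r$-element subsets typically have many common neighbors in $G$. Concretely, sample a random subset $T$ of $V(G)$ of appropriate size and let $U = \bigcap_{t \in T} N(t)$; a standard moment computation based on convexity shows that with positive probability $|U|$ is large and the expected number of ``bad'' $r$-subsets of $U$ (those with fewer than $h$ common neighbors in $G$) is small, so after removing a few vertices from $U$ one may assume every $r$-subset has at least $h$ common neighbors in $G$.

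Next, I would embed $H$ along the degeneracy ordering. Suppose $\varphi(v_1), \dots, \varphi(v_{i-1})$ have already been chosen inside $U$, and let $W_i \subseteq V(G)$ be the common neighborhood of the images of the back-neighbors of $v_i$. Since $v_i$ has at most $r$ back-neighbors, $|W_i| \geq h$, so I can pick an element of $W_i$ distinct from $\varphi(v_1), \dots, \varphi(v_{i-1})$ and assign it to $\varphi(v_i)$.

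The main obstacle is the interaction between the last two steps: the greedy embedding requires that $\varphi(v_i)$ again lie in $U$, so that it can participate as a back-neighbor for future vertices and so that its own forward-queries are controlled by the DRC property. However, DRC only guarantees that an $r$-subset of $U$ has many common neighbors somewhere in $V(G)$, not inside $U$ itself. In the bounded-degree setting of Theorem~\ref{thm:bounded max degree} this is sidestepped by placing the bounded-degree side of $H$ in $U$ and using the other side once each as ``sinks,'' so nothing needs to be re-queried inside $U$. For a genuinely $r$-degenerate $H$ every vertex may serve as a back-neighbor later, and one seems forced into a stronger DRC variant yielding a $U$ whose $r$-subsets have many common neighbors within $U$. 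Arranging this without losing the density of $U$ --- perhaps by iterating DRC, or by choosing the random centers so that $U$ is internally dense --- is the step I would expect to be the real technical difficulty, and the one on which the conjecture stands or falls.
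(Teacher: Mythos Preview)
The statement you are attempting to prove is labeled a \emph{conjecture} in the paper, and the paper does not contain a proof of it; indeed, immediately after stating it the authors write that ``the conjecture is still wide open'' and record that the best known bound, due to Alon--Krivelevich--Sudakov, is only $\ex(n,H)=O(n^{2-1/(4r)})$. So there is no paper proof to compare against.

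Your write-up is an honest sketch of why the naive extension of dependent random choice fails, and you have correctly isolated the genuine obstruction: in the bounded-degree setting of Theorem~\ref{thm:bounded max degree} one side of $H$ is embedded into the DRC set $U$ and the other side is embedded as terminal ``sinks'' that never need to be queried again, whereas for a general $r$-degenerate $H$ every vertex may later serve as a back-neighbour, so one would need the common neighbourhoods of $r$-subsets of $U$ to intersect $U$ richly. You then propose ``iterating DRC'' or choosing the random centres so that $U$ is internally dense, but you do not actually carry out either step, and both are known to run into the same quantitative wall: iterating DRC loses a factor of roughly $n^{1/r}$ in density at each level, and after even two rounds the set is too sparse to guarantee the required common neighbourhoods. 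This is precisely why the conjecture remains open and why the AKS argument only reaches the exponent $2-1/(4r)$ rather than $2-1/r$. In short, your proposal is not a proof but a correct diagnosis of the difficulty; the final paragraph is where all the content would have to go, and no one currently knows how to fill it.
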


Although the conjecture is still wide open, Alon, Krivelevich and Sudakov \cite{AKS03} proved that if a bipartite graph $H$ is $r$-degenerate, then $\ex(n,H)=O(n^{2-\frac{1}{4r}})$.

As a kind of converse to Conjecture \ref{conj:degenerate}, Erd\H os and Simonovits \cite{Erd93,ESonline} conjectured that if $H$ has minimum degree at least $r+1$, then there exists $\eps>0$ such that $\ex(n,H)=\Omega(n^{2-1/r+\eps})$. This was recently disproved by the author \cite{janzerrainbow} for all $r\geq 3$ (the conjecture is true for $r=1$). The case $r=2$ of the conjecture had been raised much earlier \cite{Erd81new}. Erd\H os was particularly interested in this case as he stated the problem several times \cite{Erd81new,Erd83,Erd90,Erd93} and offered a prize for a proof or disproof of the following statement.

\begin{conjecture}[Erd\H os--Simonovits \cite{Erd81new,Erd83,Erd90,Erd93,ESonlinenew}, \$250 for a proof and \$500 for a counterexample] \label{conj:2degenerate}
	Let $H$ be a bipartite graph. Then $\ex(n,H)=O(n^{3/2})$ holds if and only if $H$ is $2$-degenerate.
\end{conjecture}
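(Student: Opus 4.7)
\medskip

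\noindent\textbf{Plan.} To disprove Conjecture~\ref{conj:2degenerate}, I would aim at the ``only if'' direction, constructing for every $\varepsilon>0$ a $3$-regular bipartite graph $H$ with $\mathrm{ex}(n,H)=O(n^{4/3+\varepsilon})$. Being $3$-regular forces $H$ to have minimum degree $3$ and hence to be non-$2$-degenerate, while the exponent beats $3/2$ comfortably. The proof naturally splits into two tasks: (i) exhibit a candidate $3$-regular bipartite graph $H=H_\varepsilon$, and (ii) prove the upper bound $\mathrm{ex}(n,H_\varepsilon)=O(n^{4/3+\varepsilon})$. Throughout, I would think of $H$ as being assembled from a ``sparse core'' graph $F$ whose extremal number is already tiny, plus some ``padding'' that raises every vertex degree to exactly $3$ without destroying embeddability.

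\medskip

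\noindent\textbf{Construction.} The most natural sparse cores with extremal number close to $n^{1+o(1)}$ are subdivisions and related objects: for the $t$-subdivision of a clique, results of Conlon--Janzer--Lee give an exponent $1+O(1/t)$, so by choosing $t$ large one can make the core's exponent $1+\varepsilon/2$, say. The trouble is that a subdivision has many degree-$2$ vertices, so it cannot serve as $H$ itself. My plan would be to take several disjoint copies of the core and identify, or attach additional edges between, the degree-$2$ vertices in a structured way (for instance via an auxiliary bipartite $1$-factor, or by glueing two copies of the same subdivision along corresponding subdivision vertices) so that every vertex ends up with degree exactly $3$ and the graph remains bipartite. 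The key property one needs to preserve is that any host graph containing $H$ is forced to contain a ``rigid'' copy of the core $F$, so that the extremal bound for $H$ can be reduced to a bound for $F$, possibly with an extra $n^{\varepsilon/2}$ factor.

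\medskip

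\noindent\textbf{Upper bound and main obstacle.} Given the candidate $H_\varepsilon$, I would use a two-step supersaturation / dependent-random-choice argument in the spirit of the proofs for $C_{2k}$ and for subdivisions. Step one: show that any $n$-vertex graph $G$ with at least $C n^{4/3+\varepsilon}$ edges contains not just one copy, but a ``balanced'' profusion of copies of the core $F$, distributed so that no vertex or small set of vertices is overused. Step two: greedily extend one such core copy to a copy of $H_\varepsilon$ by embedding the padding edges one by one, using the balance from step one to show that the required common neighbourhoods always exist. The genuine obstacle---and the reason Erd\H{o}s and Simonovits believed the conjecture---is that the minimum-degree-$3$ condition typically forces $H$ to contain substructures (roughly, ``thick'' bipartite pieces such as blow-ups of $K_{2,2}$ or $C_6$) whose extremal number is at least $n^{3/2}$; any candidate $H$ that accidentally contains such a thick piece will fail. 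The main work is therefore to design the padding surgically enough that the resulting $3$-regular graph still admits the balanced supersaturation above, and to verify this by a careful entropy- or path-counting argument tailored to the construction.
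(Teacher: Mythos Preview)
Your proposal correctly identifies the target---a $3$-regular bipartite $H$ with $\ex(n,H)=O(n^{4/3+\eps})$---but it remains a plan rather than a proof, and the plan differs substantially from the paper's route. You yourself flag the crux: the ``padding'' that raises every degree-$2$ subdivision vertex to degree $3$ while keeping the extremal number at $n^{4/3+\eps}$ is left entirely unspecified. This is not a small technicality. Any local gadget you attach to a subdivision vertex (a matching between two copies, an extra $1$-factor, etc.) either creates short even cycles or theta-like pieces---and then the embedding reduction ``find many balanced copies of the core $F$, then extend greedily'' breaks down, because the padding edges are no longer determined by common neighbourhoods of already-placed vertices. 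Dependent random choice and the Conlon--Janzer--Lee subdivision bounds give you the core, but there is no known mechanism for bootstrapping from many copies of a subdivision to a copy of a $3$-regular supergraph of it, and your proposal does not supply one.

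The paper does something conceptually different. It does not start from a sparse core and pad; instead it writes down an explicit $3$-regular graph $H_{k,\ell}$ whose structure is tailor-made so that a copy of $H_{k,\ell}$ in $G$ is \emph{exactly} a $2\ell$-cycle in an auxiliary graph $\cG$ whose vertices are labelled $2k$-matchings of $G$ and whose edges are $8k$-cycles of $G$ (with a prescribed interleaving of the two matchings). The upper bound then becomes: show $\cG$ has a $2\ell$-cycle whose $2\ell$ vertices have pairwise disjoint coordinate sets. For this the paper uses the author's rainbow-cycle machinery: it first isolates a large family $\cC$ of $8k$-cycles in $G$ with a ``niceness'' property (for any fixed $2k$-matching $M$ and any vertex $u$, only a $n^{-\eps/2}$ fraction of the cycles in $\cC$ extending $M$ pass through $u$), and then a degree-proportional version of the conflict-cycle lemma finds the required $2\ell$-cycle in $\cG$. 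Producing $\cC$ is the technical heart and splits into two regimes according to whether $G$ has few or many $4$-cycles. None of this is visible in your outline: there is no auxiliary graph, no reduction to finding cycles with disjoint labels, and no case split on $C_4$-density. What your approach would buy, if it could be made to work, is modularity (core plus padding); what the paper's approach buys is an actual proof, at the cost of a bespoke construction and the rainbow-cycle toolkit.
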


We disprove this conjecture in a rather strong sense.

\begin{theorem} \label{thm:disproofES}
	For any $\eps>0$ there is a $3$-regular (bipartite) graph $H$ for which $\ex(n,H)=O(n^{4/3+\eps})$.
\end{theorem}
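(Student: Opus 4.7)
The strategy is to exhibit, for each $\eps>0$, an explicit $3$-regular bipartite graph $H=H_\eps$ and to prove the upper bound $\ex(n,H)=O(n^{4/3+\eps})$ via a careful embedding argument. First I would design $H$ as a ``thick subdivision'' of a small auxiliary bipartite graph $H_0$: replace each edge of $H_0$ by a long path (of length growing with $1/\eps$) and then attach small gadgets that pair up the resulting degree-$2$ subdivision vertices so as to promote them to degree $3$, being careful to preserve bipartiteness. The graph $H_0$ and the subdivision length must be tuned so that the resulting $H$ has enough rigidity for the embedding argument below to succeed, while remaining $3$-regular.

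For the upper bound, let $G$ be an $n$-vertex graph with at least $Cn^{4/3+\eps}$ edges. I would first pass to a bipartite subgraph (losing only a constant factor) and then to a nearly regular subgraph $G'$ in which almost every vertex has degree $\Theta(n^{1/3+\eps})$. The core step is to embed $H$ into $G'$ in stages, processing the vertices of $H$ in an order adapted to its subdivision structure. At each stage a moment/path-counting estimate should show that a uniformly random extension of the partial embedding has positive expected number of valid completions: the bound $n^{4/3+\eps}$ is precisely the threshold at which the expected number of ordered embeddings of the relevant subdivision paths (roughly the regime governing $\ex(n,C_6)$) exceeds the number of ``bad'' configurations (collisions, unintended short cycles, repeated vertices) that the embedding must avoid. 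The $n^\eps$ slack is used to absorb polylogarithmic losses incurred when cleaning up $G$ and when bounding higher moments of the path counts.

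The chief obstacle is the construction of $H$ itself. The Erd\H os--Simonovits conjecture had stood for four decades because every natural $3$-regular bipartite graph one writes down (e.g.\ $K_{3,3}$, $Q_3$, prisms, the Heawood graph) has extremal number $\Omega(n^{3/2})$, and because the standard probabilistic embedding for graphs of minimum degree $3$ naturally yields the exponent $3/2$ via Theorem~\ref{thm:bounded max degree}. The novelty required is to find a $3$-regular bipartite $H$ whose combinatorial structure is rigid enough that the path-counting estimates give an exponent close to $4/3$, yet with the extra edges needed to promote subdivision vertices to degree $3$ arranged so as not to destroy that rigidity or introduce shortcut embeddings. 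Making the gadget construction compatible with the moment estimates, and verifying that the exponent really drops from $3/2$ down to $4/3+\eps$ rather than some intermediate value, is the technical heart of the argument.
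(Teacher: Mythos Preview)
Your proposal is not a proof but a research plan, and it is missing precisely the ideas that make the argument go through. You acknowledge that ``the chief obstacle is the construction of $H$ itself'' and that designing the gadgets ``is the technical heart of the argument,'' yet you do not actually give a construction; a thick subdivision of some unspecified $H_0$ with unspecified degree-raising gadgets is not a graph one can analyse. Likewise, the embedding step (``a moment/path-counting estimate should show that a uniformly random extension \dots has positive expected number of valid completions'') is an aspiration, not an argument: for a $3$-regular target, the naive counts for degree-$3$ vertices run into the $n^{3/2}$ barrier you mention, and nothing in your outline explains which structural feature of your $H$ would beat it.

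The paper's proof supplies exactly the missing ingredient. The graph $H_{k,\ell}$ is built so that a copy of it in $G$ is the same thing as a $2\ell$-cycle in an auxiliary graph $\cG$ whose vertices are labelled $2k$-matchings of $G$ and whose edges correspond to $8k$-cycles of $G$ (in a prescribed vertex order). This is the key structural idea: the exponent $4/3+\eps$ arises because $|V(\cG)|\approx e(G)^{2k}$ while $|E(\cG)|$ is governed by the number of $C_{8k}$'s in $G$, and supersaturation for even cycles gives $|E(\cG)|\gtrsim |V(\cG)|^{1+\eps}$ once $e(G)\geq n^{4/3+\eps}$ and $k\geq 1/\eps$. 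One then needs a cycle in $\cG$ whose $2\ell$ vertices have pairwise-disjoint coordinate sets; this is handled not by a first-moment embedding but by restricting to a carefully chosen ``nice'' family of $8k$-cycles (Definitions~\ref{def:good} and~\ref{def:nice}, Lemmas~\ref{lemma:few4cycles}--\ref{lemma:nice implies subgraph}) and invoking the rainbow-cycle machinery of \cite{janzerrainbow}. None of this is visible in your outline, and without it there is no reason to expect the exponent to drop below $3/2$.
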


This bound is best possible. Indeed, the probabilistic deletion method shows that for any $3$-regular graph $H$ there is some $\delta>0$ such that $\ex(n,H)=\Omega(n^{4/3+\delta})$.

We now define the graphs which we will show are counterexamples to Conjecture \ref{conj:2degenerate}.

\begin{definition}
	For positive integers $k\geq 1$ and $\ell\geq 2$, let $H_{k,\ell}$ be the following graph.
	
	Let
	$$V(H_{k,\ell})=\{x_{i,j}:1\leq i\leq 4k,1\leq j\leq 2\ell\}$$
	and let
	\begin{align*}
		E(H_{k,\ell})
		&= \{x_{2i-1,j}x_{2i,j}: 1\leq i\leq 2k,1\leq j\leq 2\ell\} \\
		&\cup \{x_{1,j}x_{1,j+1}: 1\leq j\leq 2\ell\}\cup \{x_{4k,j}x_{4k,j+1}: 1\leq j\leq 2\ell\} \\
		&\cup \{x_{2i,j}x_{2i+1,j+1}: 1\leq i\leq 2k-1,1\leq j\leq 2\ell\} \\
		&\cup \{x_{2i+1,j}x_{2i,j+1}: 1\leq i\leq 2k-1,1\leq j\leq 2\ell\},
	\end{align*}
	where $x_{i,2\ell+1}:=x_{i,1}$.
	See Figure \ref{fig:H24} for the picture of $H_{2,4}$.
\end{definition}

\begin{figure}
	\centering
	\begin{tikzpicture}[scale=0.6]
	
	\draw[fill=black](0,0)circle(4pt);
    \draw[fill=black](0,1)circle(4pt);
    \draw[fill=black](0,2)circle(4pt);
    \draw[fill=black](0,3)circle(4pt);
    \draw[fill=black](0,4)circle(4pt);
    \draw[fill=black](0,5)circle(4pt);
    \draw[fill=black](0,6)circle(4pt);
    \draw[fill=black](0,7)circle(4pt);

	\draw[fill=black](2,0)circle(4pt);
    \draw[fill=black](2,1)circle(4pt);
    \draw[fill=black](2,2)circle(4pt);
    \draw[fill=black](2,3)circle(4pt);
    \draw[fill=black](2,4)circle(4pt);
    \draw[fill=black](2,5)circle(4pt);
    \draw[fill=black](2,6)circle(4pt);
    \draw[fill=black](2,7)circle(4pt);
    
    \draw[fill=black](4,0)circle(4pt);
    \draw[fill=black](4,1)circle(4pt);
    \draw[fill=black](4,2)circle(4pt);
    \draw[fill=black](4,3)circle(4pt);
    \draw[fill=black](4,4)circle(4pt);
    \draw[fill=black](4,5)circle(4pt);
    \draw[fill=black](4,6)circle(4pt);
    \draw[fill=black](4,7)circle(4pt);
	
	\draw[fill=black](0,11)circle(4pt);
    \draw[fill=black](0,12)circle(4pt);
    \draw[fill=black](0,13)circle(4pt);
    \draw[fill=black](0,14)circle(4pt);
    \draw[fill=black](0,15)circle(4pt);
    \draw[fill=black](0,16)circle(4pt);
    \draw[fill=black](0,17)circle(4pt);
    \draw[fill=black](0,18)circle(4pt);

	\draw[fill=black](2,11)circle(4pt);
    \draw[fill=black](2,12)circle(4pt);
    \draw[fill=black](2,13)circle(4pt);
    \draw[fill=black](2,14)circle(4pt);
    \draw[fill=black](2,15)circle(4pt);
    \draw[fill=black](2,16)circle(4pt);
    \draw[fill=black](2,17)circle(4pt);
    \draw[fill=black](2,18)circle(4pt);
    
    \draw[fill=black](4,11)circle(4pt);
    \draw[fill=black](4,12)circle(4pt);
    \draw[fill=black](4,13)circle(4pt);
    \draw[fill=black](4,14)circle(4pt);
    \draw[fill=black](4,15)circle(4pt);
    \draw[fill=black](4,16)circle(4pt);
    \draw[fill=black](4,17)circle(4pt);
    \draw[fill=black](4,18)circle(4pt);
    
    \draw[fill=black](-3,9)circle(4pt);
    \draw[fill=black](-4,9)circle(4pt);
    \draw[fill=black](-5,9)circle(4pt);
    \draw[fill=black](-6,9)circle(4pt);
    \draw[fill=black](-7,9)circle(4pt);
    \draw[fill=black](-8,9)circle(4pt);
    \draw[fill=black](-9,9)circle(4pt);
    \draw[fill=black](-2,9)circle(4pt);
    
    \draw[fill=black](6,9)circle(4pt);
    \draw[fill=black](7,9)circle(4pt);
    \draw[fill=black](8,9)circle(4pt);
    \draw[fill=black](9,9)circle(4pt);
    \draw[fill=black](10,9)circle(4pt);
    \draw[fill=black](11,9)circle(4pt);
    \draw[fill=black](12,9)circle(4pt);
    \draw[fill=black](13,9)circle(4pt);

    \draw[thick](0,0)--(2,0)--(2,1)--(0,2)--(0,3)--(2,4)--(2,5)--(0,6)--(0,7)--(2,7)--(2,6)--(0,5)--(0,4)--(2,3)--(2,2)--(0,1)--(0,0);

	\draw[thick](2,0)--(4,0)--(4,1)--(2,2)--(2,3)--(4,4)--(4,5)--(2,6)--(2,7)--(4,7)--(4,6)--(2,5)--(2,4)--(4,3)--(4,2)--(2,1)--(2,0);
	
	\draw[thick](0,11)--(2,11)--(2,12)--(0,13)--(0,14)--(2,15)--(2,16)--(0,17)--(0,18)--(2,18)--(2,17)--(0,16)--(0,15)--(2,14)--(2,13)--(0,12)--(0,11);

	\draw[thick](2,11)--(4,11)--(4,12)--(2,13)--(2,14)--(4,15)--(4,16)--(2,17)--(2,18)--(4,18)--(4,17)--(2,16)--(2,15)--(4,14)--(4,13)--(2,12)--(2,11);
	
	\draw[thick](-9,9)--(-8,9)(-7,9)--(-6,9)(-5,9)--(-4,9)(-3,9)--(-2,9);
	
	\draw[thick](6,9)--(7,9)(8,9)--(9,9)(10,9)--(11,9)(12,9)--(13,9);
	
	\draw[thick](-9,9)--(0,18)(-8,9)--(0,16)(-7,9)--(0,17)(-6,9)--(0,14)(-5,9)--(0,15)(-4,9)--(0,12)(-3,9)--(0,13)(-2,9)--(0,11);
	
	\draw[thick](-9,9)--(0,0)(-8,9)--(0,2)(-7,9)--(0,1)(-6,9)--(0,4)(-5,9)--(0,3)(-4,9)--(0,6)(-3,9)--(0,5)(-2,9)--(0,7);
	
	\draw[thick](13,9)--(4,18)(12,9)--(4,16)(11,9)--(4,17)(10,9)--(4,14)(9,9)--(4,15)(8,9)--(4,12)(7,9)--(4,13)(6,9)--(4,11);
	
	\draw[thick](13,9)--(4,0)(12,9)--(4,2)(11,9)--(4,1)(10,9)--(4,4)(9,9)--(4,3)(8,9)--(4,6)(7,9)--(4,5)(6,9)--(4,7);
	
	\node at (-0.7,0) {$x_{8,1}$};
	\node at (-0.7,1) {$x_{7,1}$};
	\node at (-0.7,2) {$x_{6,1}$};
	\node at (-0.7,3) {$x_{5,1}$};
	\node at (-0.7,4) {$x_{4,1}$};
	\node at (-0.7,5) {$x_{3,1}$};
	\node at (-0.7,6) {$x_{2,1}$};
	\node at (-0.7,7) {$x_{1,1}$};
	
	\node at (1.3,0) {$x_{8,2}$};
	\node at (1.3,1) {$x_{7,2}$};
	\node at (1.3,2) {$x_{6,2}$};
	\node at (1.3,3) {$x_{5,2}$};
	\node at (1.3,4) {$x_{4,2}$};
	\node at (1.3,5) {$x_{3,2}$};
	\node at (1.3,6) {$x_{2,2}$};
	\node at (1.3,7) {$x_{1,2}$};
	
	\node at (3.3,0) {$x_{8,3}$};
	\node at (3.3,1) {$x_{7,3}$};
	\node at (3.3,2) {$x_{6,3}$};
	\node at (3.3,3) {$x_{5,3}$};
	\node at (3.3,4) {$x_{4,3}$};
	\node at (3.3,5) {$x_{3,3}$};
	\node at (3.3,6) {$x_{2,3}$};
	\node at (3.3,7) {$x_{1,3}$};
	
	\node at (6,8.5) {$x_{1,4}$};
	\node at (7,9.5) {$x_{2,4}$};
	\node at (8,8.5) {$x_{3,4}$};
	\node at (9,9.5) {$x_{4,4}$};
	\node at (10,8.5) {$x_{5,4}$};
	\node at (11,9.5) {$x_{6,4}$};
	\node at (12,8.5) {$x_{7,4}$};
	\node at (13,9.5) {$x_{8,4}$};

	\end{tikzpicture}		
	\caption{The graph $H_{2,4}$}
	\label{fig:H24}
\end{figure}

Since $H_{k,\ell}$ is $3$-regular for every $k$ and $\ell$, Theorem \ref{thm:disproofES} follows from the next result.

\begin{theorem} \label{thm:turan of Hkl}
	Let $0<\eps<1/6$. If $k\geq 1/\eps$ and $\ell\geq 16k/\eps$, then $\ex(n,H_{k,\ell})=O(n^{4/3+\eps})$.
\end{theorem}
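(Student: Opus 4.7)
The strategy is to argue by contradiction. Suppose $G$ is an $n$-vertex $H_{k,\ell}$-free graph with $e(G)>Cn^{4/3+\eps}$ for a sufficiently large constant $C=C(k,\ell)$. As a first step I would clean $G$ in the standard way: pass to a bipartite subgraph and then iteratively delete vertices whose degree has dropped below a fixed constant multiple of $e(G)/n$, so that the resulting subgraph still has $\Omega(n^{4/3+\eps})$ edges and all degrees $d=\Omega(n^{1/3+\eps})$, together with some useful codegree control inherited from the cleaning.

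The structural backbone of the embedding is that $H_{k,\ell}$ decomposes into $2k+1$ \emph{layers} of $2\ell$-cycles glued by matchings: rows $1$ and $4k$ each form a single $C_{2\ell}$; each of the $2k-1$ intermediate row-pairs $\{2i,2i+1\}$ contributes a pair of vertex-disjoint $C_{2\ell}$'s coming from the zigzag edges; and consecutive layers are joined by the column matchings $\{x_{2i-1,j}x_{2i,j}\}_{j=1}^{2\ell}$. Embedding $H_{k,\ell}$ into $G$ thus reduces to building such a stacked system of $2\ell$-cycles with a matching of size $2\ell$ between each pair of consecutive boundaries. The key quantitative ingredient I would aim for is a \emph{balanced supersaturation} statement for $C_{2\ell}$ in $G$: besides having $\Omega(nd^{2\ell-1})$ labeled copies (which follows from standard Bondy--Simonovits-type moment counting and is already far more than needed), no small set of vertices or edges lies in too many of them. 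The usual route is a random edge-colouring with $\mathrm{poly}(\ell)$ colours followed by pigeonhole, extracting an almost-regular subfamily living in a single colour class.

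With the balanced family in hand, $H_{k,\ell}$ is built layer by layer: at each of the $2k+1$ steps one extends the partial embedding by a new $C_{2\ell}$ (or by an interlocking pair of $C_{2\ell}$'s) matched to the previous boundary via $2\ell$ fresh edges of $G$, and checks that the valid extensions dominate the forbidden ones (collisions with previously used vertices, or failures of the prescribed zigzag/matching pattern). The losses per step are polynomial in $\ell$ only, not in $n$, and since $\ell\geq 16k/\eps$ the slack $\eps$ in the edge-count exponent is enough to absorb the cumulative loss across the $2k+1$ steps. The hardest part is the balanced supersaturation: ruling out pathological concentration of $C_{2\ell}$'s around a small number of vertices or short paths, which is delicate because $G$ could in principle contain dense subgraphs rich in short cycles; but once supersaturation is proved with sharp enough constants, the iterative embedding is essentially a greedy/union-bound argument driven by the balanced counts, and produces a copy of $H_{k,\ell}$ contradicting the assumption on $G$.
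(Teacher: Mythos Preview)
Your structural decomposition of $H_{k,\ell}$ into $2k+1$ row-layers is correct, but the proof strategy built on it has a real gap, and it is essentially orthogonal to what the paper does.

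The paper iterates in the \emph{other} direction. It views $H_{k,\ell}$ not as a stack of $C_{2\ell}$'s joined by matchings, but as a $2\ell$-cycle in an auxiliary graph $\mathcal{G}$ whose vertices are labelled $2k$-matchings in $G$ and whose edges are (carefully selected) $8k$-cycles in $G$. The heart of the argument is to build a large collection $\mathcal{C}$ of $8k$-cycles with a \emph{niceness} property: for any matching $M$ of size $2k$ and any vertex $u$, only a tiny proportion of the cycles in $\mathcal{C}$ extending $M$ pass through $u$. This is obtained via a dichotomy on the number of $4$-cycles in $G$ (Lemmas~\ref{lemma:few4cycles} and~\ref{lemma:many4cycles}), after regularising so that $\Delta(G)=O(n^{1/3+\eps})$ and no edge sits in too many $C_4$'s. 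Once $\mathcal{C}$ is in hand, a rainbow-cycle lemma (Lemma~\ref{lemma:proportion of neighbours}) finds a $2\ell$-cycle in $\mathcal{G}$ whose $4k$-tuples have pairwise disjoint coordinate sets, which is exactly a copy of $H_{k,\ell}$.

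Your plan instead fixes an entire $C_{2\ell}$ and tries to grow the embedding one row-layer at a time. The step you describe as routine is not: given a boundary $(v_1,\dots,v_{2\ell})$, the next layer requires choosing $w_j\in N(v_j)$ and then $u_j\in N(w_{j-1})\cap N(w_{j+1})$ for all $j$ simultaneously. This is governed by $2\ell$ codegrees $d(w_{j-1},w_{j+1})$ at once, and nothing in ``balanced supersaturation for $C_{2\ell}$ in $G$'' controls these; balanced supersaturation bounds how many $C_{2\ell}$'s contain a given edge or short path, not how many $C_{2\ell}$'s lie in a product of $2\ell$ prescribed neighbourhoods. In particular your assertion that the loss per step is ``polynomial in $\ell$ only, not in $n$'' is unsupported: for a generic boundary the relevant codegrees can be $O(1)$ (or wildly uneven), and the extension count can collapse. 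This is precisely why the paper splits on the $C_4$-count and works hard to manufacture the $n^{-\eps/2}$-nice family of $8k$-cycles before invoking any cycle-finding. Your cleaning step (passing to minimum degree $\Omega(n^{1/3+\eps})$) also omits the \emph{maximum}-degree bound $\Delta(G)\le Kn^{1/3+\eps}$ that the paper obtains from Erd\H{o}s--Simonovits regularisation and uses throughout. Without an analogue of the good/nice machinery---transported to your row-wise picture---the greedy layer-by-layer scheme does not go through.
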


\subsection{Overview of the proof} \label{sec:overview}

In this subsection we give an overview of the proof of Theorem \ref{thm:turan of Hkl}. Let $G$ be an $n$-vertex graph with roughly $n^{4/3+\eps}$ edges. We seek to prove that it contains $H_{k,\ell}$ as a subgraph provided that $k$ and $\ell$ are large enough. By standard regularization methods, we may assume that $G$ is almost-regular, i.e that its maximum degree is at most a constant times the minimum degree.

Let us consider the following auxiliary graph $\cG$. The vertices of $\cG$ are (labelled) matchings of size $2k$ in $G$, or in other words $4k$-tuples $(x_1,x_2,\dots,x_{4k})\in V(G)^{4k}$ of distinct vertices such that $x_{2i-1}x_{2i}\in E(G)$ for every $1\leq i\leq 2k$. We join the vertices $(x_1,x_2,\dots,x_{4k})\in V(\cG)$ and $(y_1,y_2,\dots,y_{4k})\in V(\cG)$ by an edge in $\cG$ if $$x_1x_2y_3y_4x_5x_6y_7y_8\dots x_{4k-3}x_{4k-2}y_{4k-1}y_{4k}x_{4k}x_{4k-1}y_{4k-2}y_{4k-3}\dots x_4x_3y_2y_1x_1$$ is a cycle of length $8k$ in $G$.

Now the number of vertices in $\cG$ is equal to the number of (labelled) matchings of size $2k$ in $G$ which is at most $e(G)^{2k}\approx (n^{4/3+\eps})^{2k}$. On the other hand, the number of edges in $\cG$ is roughly equal to the number of cycles of length $8k$ in $G$. Since $G$ has $n^{4/3+\eps}$ edges, which is much greater than the Tur\'an number of $C_{8k}$, it follows by standard supersaturation results that the number of $8k$-cycles in $G$ is at least roughly $(n^{1/3+\eps})^{8k}=(n^{4/3+4\eps})^{2k}\approx |V(\cG)|^{\frac{4/3+4\eps}{4/3+\eps}}\geq |V(\cG)|^{1+\eps}$. Hence, if $\ell$ is much larger than $1/\eps$, then the number of edges in $\cG$ is much larger than the Tur\'an number of $C_{2\ell}$, and so $\cG$ contains a $2\ell$-cycle. Let the vertices of such a $2\ell$-cycle be $\vx^1,\vx^2,\dots,\vx^{2\ell}$ in the natural order, where $\vx^j=(x_{1,j},x_{2,j},\dots,x_{4k,j})$. Now if all the vertices $x_{i,j}$ are distinct, then they form a copy of $H_{k,\ell}$. Thus, our goal is to find a $2\ell$-cycle in $\cG$ in which the vertices do not share coordinates. To do this, we use the machinery from \cite{janzerrainbow}.

The techniques from \cite{janzerrainbow} would directly apply if we could show that for any matching $M$ of size $2k$ and any vertex $u\in V(G)$ outside $M$, the number of $8k$-cycles extending $M$ and containing $u$ is a small proportion of the number of $8k$-cycles extending $M$. Since we cannot prove this, we restrict our attention to a certain collection $\cC$ of $8k$-cycles in $G$ and use only these $8k$-cycles to define the auxiliary graph $\cG$. That is, we join $\vx$ and $\vy$ by an edge in $\cG$ if their coordinates form an $8k$-cycle (in a prescribed order) which belongs to $\cC$. Roughly speaking, we prove that we can find a large collection $\mathcal{C}$ of $8k$-cycles in $G$ such that for any matching $M$ of size $2k$ and any vertex $u$ outside $M$, the number of elements of $\cC$ extending $M$ and containing $u$ is a small proportion of the number of elements of $\cC$ extending $M$. This will allow us to apply results from \cite{janzerrainbow} to find a $2\ell$-cycle in $\cG$ whose vertices do not share coordinates.

\section{The proof of Theorem \ref{thm:turan of Hkl}}

\textbf{Notation and terminology.} \sloppy A \emph{homomorphic $2k$-cycle} in a graph $G$ is a $2k$-tuple $(x_1,x_2,\dots,x_{2k})\in V(G)^{2k}$ with the property that $x_1x_2,x_2x_3,\dots,x_{2k}x_1\in E(G)$. Let us write $\hom(C_{2k},G)$ for the number of homomorphic $2k$-cycles in $G$. This is also equal to the number of graph homomorphisms from $C_{2k}$ to $G$. We write $\delta(G)$ and $\Delta(G)$ for the minimum and maximum degree of $G$, respectively. The degree of $v$ in $G$ is denoted $d_G(v)$, while $d_G(u,v)$ denotes the codegree of $u$ and $v$. We omit the subscripts when this is not ambiguous.
	
\subsection{Preliminaries}

We start by recalling a few lemmas from \cite{janzerrainbow} which provide upper bounds for the number of cycles with a pair of ``conflicting" edges or vertices. The first result is Lemma 2.1 in \cite{janzerrainbow}.
	
\begin{lemma} \label{lemma:simple with edges}
	Let $k\geq 2$ be an integer and let $G=(V,E)$ be a graph on $n$ vertices. Let $\sim$ be a symmetric binary relation defined over $E$ such that for every $uv\in E$ and $w\in V$, $w$ has at most $s$ neighbours $z\in V$ which satisfy $uv\sim wz$.
	Then the number of homomorphic $2k$-cycles $(x_1,x_2,\dots,x_{2k})$ in $G$ such that $x_ix_{i+1}\sim x_jx_{j+1}$ for some $i\neq j$ (here and below $x_{2k+1}:=x_1$) is at most $$32k^{3/2}s^{1/2}\Delta(G)^{1/2}n^{\frac{1}{2k}}\hom(C_{2k},G)^{1-\frac{1}{2k}}.$$
\end{lemma}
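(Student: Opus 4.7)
Write $H := \hom(C_{2k}, G)$ and $W_\ell(x,y)$ for the number of walks of length $\ell$ from $x$ to $y$ in $G$. My plan is to bound, for each pair of cycle positions $(i,j)$, the number $S_{i,j}$ of homomorphic $2k$-cycles with $x_ix_{i+1} \sim x_jx_{j+1}$, sum these over pairs, and combine with the trivial $N \le H$.

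First I would fix $i < j$, set $a := j-i-1$ and $b := 2k-(j-i)-1$ (so $a+b = 2k-2$), and cut the cycle at the two conflicting edges to get
\[
S_{i,j} \;=\; \sum_{\substack{(u,v),(u',v') \\ \{u,v\}, \{u',v'\} \in E \\ \{u,v\} \sim \{u',v'\}}} W_a(v, u')\, W_b(v', u).
\]
Applying Cauchy--Schwarz to the two walk factors, combined with the hypothesis on $\sim$ (which for any fixed $(v,u')$ gives at most $\Delta s$ conflict-compatible $(u, v')$: at most $\Delta$ neighbours $u$ of $v$, and for each such $u$ at most $s$ neighbours $v'$ of $u'$ with $\{u',v'\}\sim\{u,v\}$), I would obtain $S_{i,j} \le \Delta s \cdot \sqrt{\hom(C_{2a}, G)\,\hom(C_{2b}, G)}$. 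When both $a,b \le k$, the spectral power-mean inequality $\hom(C_{2r}, G) = \sum_i \lambda_i^{2r} \le n^{1-r/k} H^{r/k}$, combined with $a+b = 2k-2$, then yields $S_{i,j} \le \Delta s \cdot n^{1/k}\, H^{1-1/k}$. In the remaining case where one of $a, b$ exceeds $k$, I would subdivide the longer walk via an intermediate vertex so that every walk factor appearing has length at most $k-1$, and rerun Cauchy--Schwarz on the enlarged sum to recover the same bound.

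Summing over the at most $\binom{2k}{2}$ unordered pairs, $\sum_{i<j} S_{i,j} \le \binom{2k}{2}\cdot \Delta s\cdot n^{1/k}\,H^{1-1/k}$. Since every conflict cycle has at least one conflicting pair, $N \le \sum_{i<j} S_{i,j}$; combining this with $N \le H$ via the geometric-mean estimate $N^2 \le H \cdot \sum_{i<j} S_{i,j}$ yields
\[
N \;\le\; \sqrt{2}\,k\,(\Delta s)^{1/2}\, n^{1/(2k)}\, H^{1-1/(2k)},
\]
which is comfortably within the claimed bound $32k^{3/2}(\Delta s)^{1/2}n^{1/(2k)}H^{1-1/(2k)}$ for every $k \ge 1$.

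The hard part will be the case $\max(a,b) > k$ in the walk-count step: the power-mean inequality fails above exponent $2k$, and the naive spectral-radius bound $\hom(C_{2r}) \le \Delta^{2(r-k)} H$ introduces extra factors of $\Delta$ that would ruin the final estimate. The remedy — subdividing the long walk at a well-chosen intermediate vertex so that only closed-walk counts of length $\le 2(k-1)$ appear, then rerunning Cauchy--Schwarz without picking up spurious factors of $n$ or $\Delta$ — is the most delicate bookkeeping step of the argument.
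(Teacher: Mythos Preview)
This lemma is not proved in the present paper: it is quoted verbatim as Lemma~2.1 of \cite{janzerrainbow}, so there is no in-paper argument to compare your attempt against.

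Regarding the attempt itself, the overall architecture is reasonable, and the reduction $N\le \sqrt{H\cdot\sum_{i<j}S_{i,j}}$ at the end is a clean way to land on the stated exponent. The estimate $S_{i,j}\le \Delta s\,\sqrt{\hom(C_{2a},G)\,\hom(C_{2b},G)}$ via Cauchy--Schwarz is also correct. The real problem is the range of $(a,b)$ for which the power-mean step applies. Since $a+b=2k-2$, the condition ``both $a,b\le k$'' holds only for $a\in\{k-2,k-1,k\}$; for every other placement of the conflicting pair --- that is, for most pairs $(i,j)$ --- one of $a,b$ exceeds $k$, and the inequality $\hom(C_{2r},G)\le n^{1-r/k}H^{r/k}$ genuinely fails there. (Take a pseudorandom $d$-regular graph with $d^{k}\ll n\ll d^{2k-2}$; then $\hom(C_{4k-4},G)$ is of order $d^{4k-4}$, while $n^{2/k-1}H^{2-2/k}$ is only of order $n\,d^{2k-2}$, which is much smaller.)

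Your proposed fix, ``subdivide the long walk and rerun Cauchy--Schwarz,'' is not just bookkeeping. After inserting an intermediate vertex $w$ so that $W_b=\sum_w W_{b_1}(\cdot,w)W_{b_2}(w,\cdot)$, any Cauchy--Schwarz split has to put the $\sim$-indicator together with at least one of the three walk factors; when that factor is $W_a$ with $a\ge 1$, summing out the now-unconstrained endpoints forces you to walk along $W_a$ step by step and costs an extra $\Delta$ per step, producing $\Delta^{a+1}s$ in place of $\Delta s$. The one case where the subdivision works cleanly is $a=0$ (adjacent conflicting edges), precisely because the two edges share a vertex and the marginal sums $\sum_u f(u,v')$ and $\sum_{v'} f(u,v')$ are each bounded by $\Delta s$. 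For $a\ge 1$ no such marginal control is available from the hypothesis, and the sketch as written does not close the gap.
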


The next result is {\cite[Lemma 2.4]{janzerrainbow}} in the special case $\ell=0$.

\begin{lemma} \label{lemma:bipartite with vertices}
	Let $k\geq 2$ be an integer and let $G=(V,E)$ be a graph on $n$ vertices. Let $X_1$ and $X_2$ be subsets of $V$. Let $\sim$ be a symmetric binary relation defined over $V$ such that
	\begin{itemize}
		\item for every $u\in V$ and $v\in X_1$, $v$ has at most $\Delta_1$ neighbours $w\in X_2$ and amongst them at most $s_1$ satisfies $u\sim w$, and
		\item for every $u\in V$ and $v\in X_2$, $v$ has at most $\Delta_2$ neighbours $w\in X_1$ and amongst them at most $s_2$ satisfies $u\sim w$.
	\end{itemize}
	Let $M=\max(\Delta_1 s_2,\Delta_2 s_1)$.
	Then the number of homomorphic $2k$-cycles $(x_1,x_2,\dots,x_{2k})\in (X_1\times X_2\times X_1\times \dots \times X_2)\cup (X_2\times X_1\times X_2\times \dots \times X_1)$ in $G$ such that $x_i\sim x_j$ for some $i\neq j$ is at most $$32k^{3/2}M^{1/2}n^{\frac{1}{2k}}\hom(C_{2k},G)^{1-\frac{1}{2k}}.$$
\end{lemma}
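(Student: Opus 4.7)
The plan is to adapt the strategy of Lemma \ref{lemma:simple with edges} to this vertex-based bipartite setting. For each pair of positions $1 \leq i < j \leq 2k$, let $N_{i,j}$ be the number of alternating homomorphic $2k$-cycles $(x_1,\ldots,x_{2k})$ in $G$ with $x_i \sim x_j$. Since the total count is at most $\sum_{i<j} N_{i,j}$, it suffices to prove a suitable per-pair bound and sum over the $O(k^2)$ pairs.

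Fix $(i,j)$, set $p = j-i$, $q = 2k-p$, and let $W_\ell(u,v)$ denote the number of length-$\ell$ walks in $G$ from $u$ to $v$ respecting the bipartite alternation dictated by the cycle. Then
$$N_{i,j} = \sum_{(u,v)\,:\,u \sim v} W_p(u,v)\,W_q(u,v), \qquad \hom(C_{2k},G)|_{\text{alt}} = \sum_{(u,v)} W_p(u,v)\,W_q(u,v),$$
where the sums range over vertices in the parts of the bipartition dictated by the parities of $i,j$. Peel off the first edge of each walk -- writing $W_p(u,v) = \sum_{w:\,uw\in E} W_{p-1}(w,v)$ and $W_q(u,v) = \sum_{z:\,vz\in E} W_{q-1}(u,z)$ -- so that $N_{i,j}$ becomes a sum over quadruples $(u,v,w,z)$ with $u\sim v$ and $uw, vz \in E$. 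Apply Cauchy--Schwarz to separate the two walk factors. For fixed $(w,v)$ (respectively $(u,z)$), the number of admissible $u$ (respectively $v$) is controlled by the structural hypothesis: e.g.\ with ``$v$'' in the hypothesis played by $w \in X_{\bar a}$ and ``$u$'' by $v$, we obtain at most $s_{\bar a}$ choices for $u$, while $z$ ranges over at most $\Delta_a$ neighbours of $v$. The product $s_{\bar a}\Delta_a$ is either $\Delta_1 s_2$ or $\Delta_2 s_1$, and in both parity configurations is bounded by $M = \max(\Delta_1 s_2, \Delta_2 s_1)$.

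This Cauchy--Schwarz step, combined with the log-convexity of $\hom(C_{2\ell},G)$ in $\ell$ (interpolating between $\ell=0$, where the count is $n$, and $\ell=k$), gives a first-order bound of the form $N_{i,j}\le M\,n^{1/k}\hom(C_{2k},G)^{1-1/k}$. This is weaker than the claimed exponents. To reach the sharp bound, one iterates the peeling and Cauchy--Schwarz step: the two restricted sums $\sum W_{p-1}^2$ and $\sum W_{q-1}^2$ that appear on the right are themselves instances of the lemma for shorter cycles $C_{2(p-1)}, C_{2(q-1)}$ with a pair of $\sim$-related vertices at specified positions. Inducting on $k$ and applying the same bound recursively, each iteration halves the ``exponent deficit'' on both $n$ and $\hom(C_{2k},G)$, and after $O(\log k)$ stages the exponents refine to the sharp $1/(2k)$ and $1-1/(2k)$ respectively, while the exponent on $M$ drops to $1/2$.

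The main obstacle is the parity and part bookkeeping throughout the iterated (or inductive) procedure: depending on the parities of $i, j$ and the various indices chosen at each recursive step, the vertices $u, v, w, z$ lie in different combinations of $X_1$ and $X_2$, and the two halves of the hypothesis (with parameters $(\Delta_1,s_1)$ and $(\Delta_2,s_2)$) must be applied in the appropriate order. The quantity $M = \max(\Delta_1 s_2, \Delta_2 s_1)$ is engineered so that every parity configuration yields the same controlling factor, allowing the recursion to proceed uniformly. Summing the resulting per-pair bound over the $O(k^2)$ pairs $(i,j)$ and absorbing constants into the $32 k^{3/2}$ factor completes the proof.
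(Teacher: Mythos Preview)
First, note that the paper does not give its own proof of this lemma: it is quoted verbatim as the special case $\ell=0$ of \cite[Lemma~2.4]{janzerrainbow}, so there is no in-paper argument to compare against. I will therefore just assess whether your sketch stands on its own.

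The initial setup is fine: decomposing into the counts $N_{i,j}$, writing $N_{i,j}=\sum_{u\sim v} W_p(u,v)W_q(u,v)$, peeling one edge at each end, and applying Cauchy--Schwarz over the quadruples $(u,v,w,z)$ is a reasonable first move. It does yield, after bounding the number of admissible $(u,z)$ (respectively $(v,w)$) by $M$ and using log-convexity of $\hom(C_{2\ell},G)$ between $\ell=0$ and $\ell=k$, a bound of the shape
\[
N_{i,j}\;\le\; M\,\bigl(\hom(C_{2(p-1)},G)\,\hom(C_{2(q-1)},G)\bigr)^{1/2}\;\le\; M\,n^{1/k}\,\hom(C_{2k},G)^{1-1/k}.
\]
That is your ``first-order bound''.

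The gap is in the iteration step. After Cauchy--Schwarz, the two factors you obtain are
\[
\sum_{u,v,w,z}[u\sim v,\,uw\in E,\,vz\in E]\,W_{p-1}(w,v)^2
\quad\text{and}\quad
\sum_{u,v,w,z}[\cdots]\,W_{q-1}(u,z)^2.
\]
Once you sum out the pair $(u,z)$ in the first factor (which is exactly how the $M$ appears), what remains is the unconstrained quantity $\sum_{w,v} W_{p-1}(w,v)^2=\hom(C_{2(p-1)},G)$; there is \emph{no} $\sim$-relation left between two vertices of a shorter cycle. If instead you try to keep $u$ and view the object as a $2p$-cycle through $u$ and $v$ with $u\sim v$, you still carry the stray factor coming from the sum over $z$ (essentially a degree of $v$), and symmetrically for the other factor; the recursion does not close up into a clean instance of the same lemma on $C_{2(p-1)}$ and $C_{2(q-1)}$. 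Consequently the claim that ``each iteration halves the exponent deficit'' and that the exponents on $M$, $n$ and $\hom(C_{2k},G)$ converge to $1/2$, $1/(2k)$ and $1-1/(2k)$ is not substantiated by the scheme you describe. As written, the argument is stuck at $M\,n^{1/k}\hom(C_{2k},G)^{1-1/k}$ per pair, which after summing over $O(k^2)$ pairs does not recover the stated bound $32k^{3/2}M^{1/2}n^{1/(2k)}\hom(C_{2k},G)^{1-1/(2k)}$.

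In the source \cite{janzerrainbow}, the sharp exponents are obtained not by iterating this particular Cauchy--Schwarz but via a more careful single-step argument that removes one carefully chosen edge and combines the $\sim$-hypothesis with a path-counting inequality tailored to produce the $M^{1/2}$ and the $n^{1/(2k)}\hom(C_{2k},G)^{1-1/(2k)}$ factors directly; you would need to supply that (or an equivalent) step rather than the recursive scheme.
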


The proof of the next lemma is almost verbatim the same as that of {\cite[Lemma 5.4]{janzerrainbow}} and is therefore omitted.

\begin{lemma} \label{lemma:blowupbiregular}
	Let $G$ be an $n$-vertex graph with average degree $d>0$. Then there exist $D_1,D_2\geq \frac{d}{4}$ and a non-empty bipartite subgraph $G'$ in $G$ with parts $X_1$ and $X_2$ such that for every $x\in X_1$, we have $d_{G'}(x)\geq \frac{D_1}{256(\log n)^2}$ and $d_{G}(x)\leq D_1$, and for every $x\in X_2$, we have $d_{G'}(x)\geq \frac{D_2}{256(\log n)^2}$ and $d_{G}(x)\leq D_2$.
\end{lemma}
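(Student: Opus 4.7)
The plan is a two-stage dyadic decomposition: first on $G$-degrees, to extract a bipartite piece in which each side consists of vertices of comparable $G$-degree; then on the resulting bipartite graph, via an iterative pruning, to secure the required lower bound on the minimum degree.

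First I would partition $V(G)$ into dyadic $G$-degree classes $L_i=\{v\in V(G): 2^i\leq d_G(v)<2^{i+1}\}$ for $i=0,1,\ldots,\lceil\log_2 n\rceil$. Vertices with $d_G(v)<d/4$ contribute less than $nd/4$ to $\sum_v d_G(v)=nd$, so at least $nd/4$ of the $nd/2$ edges of $G$ lie between classes $L_i,L_j$ with $2^i,2^j\geq d/4$. Since there are only $O((\log n)^2)$ such unordered pairs, pigeonhole yields indices $i_0,j_0$ and at least $\Omega(nd/(\log n)^2)$ edges joining $L_{i_0}$ and $L_{j_0}$; in the diagonal case $i_0=j_0$ I would first fix a bipartition of $L_{i_0}$ cutting at least half of the internal edges. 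Call the two resulting vertex sets $A_1,A_2$ and set $D_1:=2^{i_0+1}$, $D_2:=2^{j_0+1}$; these satisfy $D_1,D_2\geq d/2\geq d/4$, and every $x\in A_k$ has $d_G(x)<D_k$. Let $G''$ be the bipartite subgraph on $(A_1,A_2)$ formed by the chosen edges, so that $e(G'')\geq nd/(16(\log n)^2)$.

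Second, I would iteratively delete a vertex from $A_k$ whenever its current $G''$-degree drops below $D_k/(256(\log n)^2)$, for $k=1,2$. Each such deletion destroys at most $D_k/(256(\log n)^2)$ edges, so the total number of edges removed is at most
\[
\frac{|A_1|D_1+|A_2|D_2}{256(\log n)^2}.
\]
Since every $v\in A_k$ has $d_G(v)\geq D_k/2$, we have $|A_k|D_k\leq 2\sum_{v\in A_k}d_G(v)\leq 4e(G)=2nd$, and thus the number of edges removed is at most $nd/(64(\log n)^2)$, which is strictly less than $e(G'')$. Hence the resulting bipartite graph $G'$ is non-empty, and taking $X_1,X_2$ to be the surviving sides of $A_1,A_2$ yields the claimed object.

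The only delicate point is that naive pruning would destroy all edges if the initial average $G''$-degree were much smaller than $D_k$; what rescues the argument is that the dyadic bucketing automatically provides the bound $|A_k|D_k=O(e(G))$, while pigeonhole keeps $e(G'')$ within a $(\log n)^2$-factor of $e(G)$. Consequently the threshold $D_k/(256(\log n)^2)$ sits comfortably below the level at which the pruning could exhaust all edges, and everything else is routine bookkeeping.
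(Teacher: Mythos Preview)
Your two-stage argument (dyadic bucketing on $G$-degrees, pigeonhole over the $O((\log n)^2)$ class-pairs, then iterative pruning below the threshold $D_k/(256(\log n)^2)$) is precisely the standard proof of this lemma and matches the approach the paper defers to in \cite{janzerrainbow}. One small slip: from $d_G(v)\geq d/4$ for $v\in L_{i_0}$ you only get $D_1=2^{i_0+1}>d/4$, not $D_1\geq d/2$ as you wrote; but since the lemma only asks for $D_k\geq d/4$ this is harmless, and the rest of your bookkeeping (in particular $|A_k|D_k\leq 2nd$ and the comparison $nd/(64(\log n)^2)<e(G'')$) is sound.
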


Finally, we recall \cite[Lemma 4.4]{janzerrainbow}.

\begin{lemma} \label{lemma:homcycles}
	Let $G$ be a bipartite graph with parts $X$ and $Y$ such that $d(x)\geq s$ for every $x\in X$ and $d(y)\geq t$ for every $y\in Y$. Then, for every positive integer $k$,
	$$\hom(C_{2k},G)\geq s^kt^k.$$
\end{lemma}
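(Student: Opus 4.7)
The plan is a short combinatorial counting argument built on Cauchy--Schwarz. The starting observation is that $\hom(C_{2k},G)$ counts closed walks of length $2k$ in $G$, and every such walk decomposes uniquely as an ordered pair of length-$k$ walks joining the same two endpoints. Writing $p_k(u,v)$ for the number of length-$k$ walks from $u$ to $v$, this gives the identity
\[
\hom(C_{2k},G) \;=\; \sum_{u,v\in V(G)} p_k(u,v)^2.
\]

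Next I would lower-bound $f_k(v):=\sum_u p_k(v,u)$, the total number of length-$k$ walks starting at $v$. Since $G$ is bipartite, such a walk alternates between $X$ and $Y$; constructing it greedily and using the hypothesis $d(x)\geq s$ for $x\in X$ and $d(y)\geq t$ for $y\in Y$, one obtains, for $v\in X$, the estimate $f_k(v)\geq s^{\lceil k/2\rceil} t^{\lfloor k/2\rfloor}$, with the analogous bound (swap $s$ and $t$) for $v\in Y$.

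The final step is Cauchy--Schwarz applied to the walk-counting functions. For $v\in X$, the function $p_k(v,\cdot)$ is supported in $X$ if $k$ is even and in $Y$ if $k$ is odd, so
\[
\sum_u p_k(v,u)^2 \;\geq\; \frac{f_k(v)^2}{|X|} \quad\text{or}\quad \frac{f_k(v)^2}{|Y|},
\]
respectively (and similarly for $v\in Y$). Summing, the even-$k$ case already yields $\hom(C_{2k},G)\geq 2\,s^k t^k$. In the odd-$k$ case, say $k=2m+1$, the $X$- and $Y$-contributions sum to $(st)^{2m}\bigl(\tfrac{|X|s^2}{|Y|}+\tfrac{|Y|t^2}{|X|}\bigr)$, and the AM--GM inequality $\tfrac{|X|s^2}{|Y|}+\tfrac{|Y|t^2}{|X|}\geq 2st$ delivers the required $s^k t^k$ bound.

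I don't anticipate a substantive obstacle: the argument is essentially one identity plus one application of Cauchy--Schwarz. The only item that needs care is the parity bookkeeping---which side of the bipartition the support of $p_k(v,\cdot)$ lives in, and the need to combine the $X$- and $Y$-contributions in the odd-$k$ case via AM--GM rather than treating them separately.
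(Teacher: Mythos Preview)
The paper does not supply its own proof of this lemma; it merely recalls it as \cite[Lemma 4.4]{janzerrainbow}. Your argument is correct: the identity $\hom(C_{2k},G)=\sum_{u,v}p_k(u,v)^2$ is just $\mathrm{tr}(A^{2k})=\mathrm{tr}((A^k)^2)$, the greedy lower bound $f_k(v)\geq s^{\lceil k/2\rceil}t^{\lfloor k/2\rfloor}$ for $v\in X$ (and its analogue for $v\in Y$) is valid, and the Cauchy--Schwarz step together with AM--GM in the odd case works exactly as you describe, in fact yielding $2s^kt^k$ in both parities. The parity bookkeeping you flagged is indeed the only subtlety, and you have it right.
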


The next lemma is a variant of Lemma \ref{lemma:bipartite with vertices}.

\begin{lemma} \label{lemma:proportion of neighbours}
	Let $k\geq 2$ be an integer and let $G=(V,E)$ be a non-empty graph on $n$ vertices. Let $\sim$ be a symmetric binary relation defined over $V$ such that for every $u\in V$ and $v\in V$, $v$ has at most $\alpha d(v)$ neighbours $w\in V$ which satisfy $u\sim w$. If $\alpha<(2^{20}k^3(\log n)^4 n^{1/k})^{-1}$, then
	there exists a homomorphic $2k$-cycle $(x_1,x_2,\dots,x_{2k})$ in $G$ such that for all $i\neq j$, we have $x_i\not \sim x_j$.
\end{lemma}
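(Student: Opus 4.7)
The plan is to pass to a near-biregular bipartite subgraph of $G$ and then compare, by counting, the number of ``bad'' homomorphic $2k$-cycles (those with some $x_i\sim x_j$) against the total number of homomorphic $2k$-cycles; if the former is strictly less than the latter, a homomorphic $2k$-cycle with no related pair must exist.

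First, I will invoke Lemma \ref{lemma:blowupbiregular} on $G$ to extract a non-empty bipartite subgraph $G'$ with parts $X_1,X_2$ and positive values $D_1,D_2$ such that each $v\in X_i$ satisfies $D_i/(256(\log n)^2) \leq d_{G'}(v)$ and $d_G(v) \leq D_i$. Then I will apply Lemma \ref{lemma:bipartite with vertices} to $G'$ with the same relation $\sim$, taking $\Delta_i = D_i$ and $s_i = \alpha D_i$. The choice of $s_i$ is legitimate: the hypothesis on $\sim$ ensures that, for any $u\in V$, at most $\alpha d_G(v) \leq \alpha D_i$ neighbours $w$ of $v$ in $V$ (hence at most this many $w \in X_{3-i}$ in $G'$) satisfy $u\sim w$. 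Since $G'$ is bipartite between $X_1$ and $X_2$, every homomorphic $2k$-cycle in $G'$ alternates between these parts and is thus of the form counted by Lemma \ref{lemma:bipartite with vertices}. With $M \leq \alpha D_1 D_2$, the number of bad homomorphic $2k$-cycles in $G'$ is at most
$$32 k^{3/2} (\alpha D_1 D_2)^{1/2} n^{1/(2k)} \hom(C_{2k}, G')^{1-1/(2k)}.$$

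Next, I will lower-bound the total via Lemma \ref{lemma:homcycles} with $s = D_1/(256(\log n)^2)$ and $t = D_2/(256(\log n)^2)$, giving
$$\hom(C_{2k}, G') \geq \left(\frac{D_1 D_2}{(256(\log n)^2)^2}\right)^k.$$
Combining the two bounds, bad $<$ total whenever $32 k^{3/2} (\alpha D_1 D_2)^{1/2} n^{1/(2k)} < \hom(C_{2k}, G')^{1/(2k)}$; after substituting the lower bound the factors $D_1 D_2$ cancel, and this reduces to an inequality of the form $\alpha < c \cdot (k^3 (\log n)^4 n^{1/k})^{-1}$ for some absolute constant $c > 0$, which will be met by the stated hypothesis $\alpha < (2^{20} k^3(\log n)^4 n^{1/k})^{-1}$.

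The main obstacle is really just the final numerical check: verifying that the constant $c$ produced by combining the $32$ from Lemma \ref{lemma:bipartite with vertices} and the $256$ from Lemma \ref{lemma:blowupbiregular} is large enough (i.e.\ $c \geq 2^{-20}$) to absorb the stated bound. Once this bookkeeping is done, the existence of a bad-free homomorphic $2k$-cycle in $G' \subseteq G$ follows immediately, completing the proof.
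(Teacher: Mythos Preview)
Your proposal is correct and follows essentially the same approach as the paper: apply Lemma~\ref{lemma:blowupbiregular} to pass to a near-biregular bipartite subgraph $G'$, bound the number of bad homomorphic $2k$-cycles via Lemma~\ref{lemma:bipartite with vertices} with $\Delta_i=D_i$ and $s_i=\alpha D_i$, lower-bound $\hom(C_{2k},G')$ via Lemma~\ref{lemma:homcycles}, and compare. The paper carries out exactly this computation; your remaining ``bookkeeping'' is precisely the numerical check the paper performs explicitly.
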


\begin{proof}
    Choose $G'$ according to Lemma \ref{lemma:blowupbiregular}. By Lemma \ref{lemma:homcycles}, $\hom(C_{2k},G')\geq \frac{D_1^k D_2^k}{2^{9k}(\log n)^{4k}}$, so $\hom(C_{2k},G')^{\frac{1}{2k}}\geq 2^{-9/2}(\log n)^{-2}D_1^{1/2} D_2^{1/2}$.
    
    For any $u\in V$ and $v\in X_1$, the number of neighbours $w$ of $v$ in $G$ with $u\sim w$ is at most $\alpha d(v)\leq \alpha D_1$. Similarly, for any $u\in V$ and $v\in X_2$, $v$ has at most $\alpha D_2$ neighbours $w$ with $u\sim w$. Hence we may apply Lemma \ref{lemma:bipartite with vertices} with $\Delta_1=D_1,s_1=\alpha D_1,\Delta_2=D_2,s_2=\alpha D_2$. Then $M=\alpha D_1D_2$ and we obtain that the number of homomorphic $2k$-cycles $(x_1,x_2,\dots,x_{2k})$ in $G'$ such that $x_i\sim x_j$ for some $i\neq j$ is at most $32k^{3/2}\alpha^{1/2}D_1^{1/2}D_2^{1/2}n^{\frac{1}{2k}}\hom(C_{2k},G')^{1-\frac{1}{2k}}$. Since $\alpha<(2^{20}k^3(\log n)^4 n^{1/k})^{-1}$, this is less than $2^{-5}(\log n)^{-2}D_1^{1/2}D_2^{1/2}\hom(C_{2k},G')^{1-\frac{1}{2k}}$. But $\hom(C_{2k},G')^{\frac{1}{2k}}\geq 2^{-9/2}(\log n)^{-2}D_1^{1/2} D_2^{1/2}$, so this is in turn less than $\hom(C_{2k},G')$. It follows that there is a homomorphic $2k$-cycle $(x_1,x_2,\dots,x_{2k})$ in $G'$ such that $x_i\not \sim x_j$ for all $i\neq j$.
\end{proof}

The next two results are well-known lower bounds on the number of homomorphic and genuine copies of $C_{2k}$, respectively.

\begin{lemma}[Sidorenko \cite{Si92}] \label{lemma:sidorenko}
	Let $k\geq 2$ be an integer and let $G$ be a graph on $n$ vertices. Then $$\hom(C_{2k},G)\geq \left(\frac{2|E(G)|}{n}\right)^{2k}.$$
\end{lemma}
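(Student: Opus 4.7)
The plan is to use a very short spectral argument. Let $A$ be the adjacency matrix of $G$, and let $\lambda_1\geq \lambda_2\geq \cdots \geq \lambda_n$ be its eigenvalues (all real, since $A$ is symmetric).

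First, I would identify $\hom(C_{2k},G)$ with the number of closed walks of length $2k$ in $G$, which in turn equals $\mathrm{tr}(A^{2k})=\sum_{i=1}^n \lambda_i^{2k}$. Because the exponent $2k$ is even, every summand is nonnegative, so dropping all terms but the leading one gives $\hom(C_{2k},G)\geq \lambda_1^{2k}$.

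Second, I would bound $\lambda_1$ from below via the Rayleigh quotient $\lambda_1=\max_{x\neq 0}\frac{x^\top A x}{x^\top x}$. Testing with the all-ones vector $\mathbf{1}\in \mathbb{R}^n$ gives numerator $\mathbf{1}^\top A\mathbf{1}=\sum_{u,v}A_{uv}=2|E(G)|$ and denominator $\mathbf{1}^\top \mathbf{1}=n$, hence $\lambda_1\geq 2|E(G)|/n$. Combining the two displayed inequalities yields the claim.

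There is essentially no obstacle here: the argument is two lines. If one prefers a proof that avoids spectral theory, the same bound follows by writing $\mathrm{tr}(A^{2k})=\|A^k\|_F^2\geq \frac{1}{n^2}(\mathbf{1}^\top A^k \mathbf{1})^2$ via Cauchy--Schwarz, and then using a $k$-fold Cauchy--Schwarz (the Blakley--Roy inequality) to obtain $\mathbf{1}^\top A^k \mathbf{1}\geq (2|E(G)|)^k/n^{k-1}$, which combine to give the same $(2|E(G)|/n)^{2k}$ lower bound.
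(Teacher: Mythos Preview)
Your spectral argument is correct. The paper does not actually prove this lemma; it simply cites Sidorenko~\cite{Si92} and treats the statement as a known result. Your proof via $\hom(C_{2k},G)=\mathrm{tr}(A^{2k})\geq \lambda_1^{2k}$ together with the Rayleigh bound $\lambda_1\geq \tfrac{2|E(G)|}{n}$ is the standard short derivation of this special case of Sidorenko's conjecture, and it is entirely self-contained. The alternative Cauchy--Schwarz/Blakley--Roy route you sketch is also valid and yields the same bound without appealing to eigenvalues.
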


\begin{lemma}[Morris--Saxton \cite{MS16}] \label{lemma:supersaturation}
	For any $k\geq 2$, there exist $C=C(k)$ and $c=c(k)>0$ such that any $n$-vertex graph with $e\geq Cn^{1+1/k}$ edges has at least $ce^{2k}/n^{2k}$ copies of $C_{2k}$.
\end{lemma}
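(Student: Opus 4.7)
My plan is to combine Sidorenko's lower bound (Lemma~\ref{lemma:sidorenko}) on homomorphic $2k$-cycles with an upper bound on the non-injective ones. By Sidorenko, $\hom(C_{2k},G) \geq (2e/n)^{2k}$, and since each genuine copy of $C_{2k}$ corresponds to exactly $4k$ labelled homomorphic $2k$-cycles, it suffices to show that the number $\mathrm{Deg}$ of non-injective homomorphic $2k$-cycles satisfies $\mathrm{Deg} \leq \hom(C_{2k},G)/2$ whenever $e \geq Cn^{1+1/k}$ with $C=C(k)$ sufficiently large; this then yields at least $c(k)\,e^{2k}/n^{2k}$ genuine copies.

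First, I would reduce to an almost-regular subgraph $G'$ via a standard regularization argument, so that $G'$ has $\Omega(e)$ edges and maximum degree $d:=O(e/n) \geq \Omega(Cn^{1/k})$. Next, I would classify each non-injective homomorphism $\phi\colon V(C_{2k}) \to V(G')$ by the equivalence relation on $\{1,\ldots,2k\}$ given by $\phi(i)=\phi(j)$, which determines a quotient multigraph $\Theta = C_{2k}/\!\sim$ on fewer than $2k$ vertices; the non-injective count then splits as $\mathrm{Deg} = \sum_\Theta N_\Theta$, where $N_\Theta$ counts the homomorphisms $\Theta \to G'$. For each such $\Theta$, I would bound $N_\Theta$ by iteratively applying Cauchy--Schwarz along a suitable edge ordering of $\Theta$, exploiting the cyclic structure of $\Theta$ and the regularity of $G'$ to reduce the estimate to controlled powers of $d$.

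The main obstacle is that naive bounds on $N_\Theta$ (e.g.\ $N_\Theta \leq n \cdot d^{e(\Theta)-1}$) are a factor of $n$ too weak to push $\mathrm{Deg}$ below $\hom(C_{2k},G')/2$. To achieve the required saving, I would combine iterated Cauchy--Schwarz with moment inequalities such as $\hom(C_{2m},G') \leq n^{1-m/k}\hom(C_{2k},G')^{m/k}$ for $m \leq k$ (which follows from H\"older applied to the eigenvalues of the adjacency matrix) and $\hom(C_{2m},G') \leq d^{2(m-k)}\hom(C_{2k},G')$ for $m \geq k$ (from $|\lambda_i| \leq \lambda_1 \leq d$). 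The bookkeeping is delicate but finite in $k$: there are only $O_k(1)$ quotient patterns, and provided $C=C(k)$ is sufficiently large, the contribution from each is at most $O_k(1/C)$ times $\hom(C_{2k},G')$, yielding the required total bound.
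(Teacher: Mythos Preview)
The paper does not prove this lemma; it is quoted from Morris--Saxton~\cite{MS16}, whose proof proceeds by an explicit greedy construction of many well-distributed copies of $C_{2k}$, not by homomorphism counting.

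Your outline has a genuine gap at the regularization step. You assert that a standard argument yields an almost-regular $G'$ with $\Omega(e)$ edges and maximum degree $O(e/n)$; no such lemma exists. In an unbalanced complete bipartite graph $K_{m,n}$ with $m=o(n)$ (and $m$ chosen so that $e\ge C(m+n)^{1+1/k}$), every subgraph retaining a constant fraction of the edges has a vertex of degree $\Omega(n)$ on the small side, whereas $e/(m+n)\le m$. What Lemma~\ref{lemmaES} actually gives is an almost-regular $G'$ on some $m'$ vertices with $e(G')\ge\tfrac{2}{5}(m')^{1+\alpha}$, where $m'$ may be only a small power of $n$; the average degree of $G'$ is then $\Theta((m')^{\alpha})$, possibly polynomially smaller than $e/n=\Theta(n^{\alpha})$, so running your argument inside $G'$ produces at best $\Omega\bigl((m')^{2k\alpha}\bigr)$ copies of $C_{2k}$ rather than the required $\Omega(e^{2k}/n^{2k})$. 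A secondary issue: even inside an almost-regular $G'$, bounding $\sum_v (A^a)_{vv}(A^{2k-a})_{vv}$ by Cauchy--Schwarz as $(\mathrm{tr}\,A^{2a})^{1/2}(\mathrm{tr}\,A^{4k-2a})^{1/2}$ and then applying your two moment inequalities only gives $N_\Theta \le (n')^{(k-a)/(2k)}\cdot\hom(C_{2k},G')$ up to constants, which is not $o(\hom(C_{2k},G'))$; a sharper degenerate-count inequality (of the type in Lemma~\ref{lemma:bipartite with vertices}) would be needed, and even that would not repair the regularization loss.
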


As is very common in bipartite Tur\'an problems, we will use the following regularization lemma of Erd\H os and Simonovits. A graph $G$ is called $K$-almost regular if $\Delta(G)\leq K\delta(G)$.

\begin{lemma}[Erd\H os--Simonovits \cite{ES70}] \label{lemmaES}
	Let $0<\alpha<1$ and let $n$ be a positive integer that is sufficiently large as a function of $\alpha$. Let $G$ be a graph on $n$ vertices with $e(G)\geq n^{1+\alpha}$. Then $G$ contains a $K$-almost regular subgraph $G'$ on $m\geq n^{\alpha\frac{1-\alpha}{1+\alpha}}$ vertices such that $e(G')\geq \frac{2}{5}m^{1+\alpha}$ and $K=10\cdot 2^{\frac{1}{\alpha^2}+1}$.
\end{lemma}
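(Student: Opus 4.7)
The plan is to prove Lemma \ref{lemmaES} by an iterative vertex-removal argument that successively passes to denser, more balanced subgraphs until the ratio $\Delta(G')/\delta(G')$ becomes at most $K$. Starting from $G_0 = G$ with $n_0 = n$ and $e(G_0) \geq n^{1+\alpha}$, I would build a sequence $G_0 \supseteq G_1 \supseteq \dots \supseteq G_T$ in which each $G_{i+1}$ is obtained from $G_i$ by one of two operations: (i) direct removal of vertices of unusually low degree, or (ii) restriction to a bipartite subgraph between two dyadic degree classes. The process halts as soon as the current $G_i$ is already $K$-almost regular and carries enough edges, in which case it is the desired $G'$.

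For operation (i), the idea is that vertices of degree much below the average contribute few edges: removing all vertices of degree below a threshold $T = c \cdot 2e(G_i)/n_i$ discards at most a small fraction of edges, since the total degree contributed by low-degree vertices is bounded by $n_i T$. This alone is not enough, because $\Delta$ may still be huge. Operation (ii) attacks the spread directly: sort $V(G_i)$ into dyadic classes $D_j = \{v : 2^j \leq d_{G_i}(v) < 2^{j+1}\}$, of which at most $\log_2 n$ are nonempty, and apply pigeonhole over the $(\log n)^2$ pairs $(D_j, D_k)$ to locate a bipartite subgraph carrying at least $e(G_i)/(\log n)^2$ edges in which both side-degrees lie in intervals of ratio $2$. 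Alternating the two operations, after $O(1/\alpha^2)$ rounds the global spread $\Delta/\delta$ is squeezed down to the stated $K = 10 \cdot 2^{1/\alpha^2+1}$, the doubly-exponential dependence reflecting the worst-case number of iterations.

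The exponent $\beta = \alpha(1-\alpha)/(1+\alpha)$ would emerge from the balance between vertex loss and edge-density preservation. At each round we lose at most a constant fraction of vertices and at most a polylogarithmic factor of edges, and we must preserve the invariant $e(G_i) \geq c_i m_i^{1+\alpha}$ long enough to complete the $O(1/\alpha^2)$ iterations. Solving the resulting recurrence — roughly $m_{i+1} \geq m_i^{1-\eta}$ for a small $\eta$ depending on $\alpha$, combined with $e_{i+1} \geq e_i/\mathrm{polylog}\, n$ — yields $m_T \geq n^{\alpha(1-\alpha)/(1+\alpha)}$ with edge count at least $\tfrac{2}{5} m_T^{1+\alpha}$, where the $\tfrac{2}{5}$ absorbs all the accumulated multiplicative losses.

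The main obstacle will be the simultaneous control of three quantities through the iteration: the vertex count $m_i$ (to ensure $m_T \geq n^\beta$), the edge count $e(G_i)$ (to ensure $e(G_T) \geq \tfrac{2}{5} m_T^{1+\alpha}$), and the degree spread $\Delta(G_i)/\delta(G_i)$ (to drive it below $K$). The delicate point is to show that the polylogarithmic losses from each pigeonhole step do not accumulate faster than the vertex count shrinks; this forces one to choose the threshold $T$ with foresight and to switch between operations (i) and (ii) depending on whether $G_i$ already has balanced average degree or instead has a sharply skewed degree distribution. Verifying that one of these choices always yields enough progress is the technical heart of the argument, and is what determines the precise form of both the exponent $\beta$ and the constant $K$ in the statement.
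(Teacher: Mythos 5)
A preliminary remark: the paper contains no proof of Lemma~\ref{lemmaES} at all --- it is quoted as a known result of Erd\H os and Simonovits \cite{ES70}, and the paper only derives the variant Lemma~\ref{lemmaESvariant} from it by a routine random sparsification. So your proposal has to be judged against the classical argument in the literature, not against anything in this paper. Judged on its own terms, what you have written is a plan rather than a proof: every quantitatively decisive claim --- that $O(1/\alpha^2)$ alternations of operations (i) and (ii) force $\Delta/\delta\le K$, that the recurrence ``roughly $m_{i+1}\ge m_i^{1-\eta}$'' closes to give $m\ge n^{\alpha\frac{1-\alpha}{1+\alpha}}$, that the constant $\frac{2}{5}$ ``absorbs all the accumulated multiplicative losses'', and that the specific value $K=10\cdot 2^{\frac{1}{\alpha^2}+1}$ emerges --- is asserted rather than derived, and you yourself defer the verification as ``the technical heart of the argument''. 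That heart is precisely the content of the lemma, so as it stands there is a genuine gap.

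Two of the asserted steps are also problematic as described. First, the dyadic restriction (ii) controls degrees in $G_i$, not in the bipartite graph between $D_j$ and $D_k$: a vertex of $D_j$ may retain only a negligible fraction of its $G_i$-neighbours inside $D_k$, so after (ii) the spread $\Delta/\delta$ of the new graph is not controlled at all, and you have defined no progress measure showing that alternating (i) and (ii) terminates, let alone within $O(1/\alpha^2)$ rounds. Second, the edge bookkeeping cannot work as stated: each use of (ii) costs a factor $(\log n)^2$ in edges, and a fixed constant such as $\frac{2}{5}$ cannot absorb polylogarithmic losses --- if the vertex count stays close to $n$ while the edge count drops to $n^{1+\alpha}/\mathrm{polylog}(n)$, the conclusion $e(G')\ge\frac{2}{5}m^{1+\alpha}$ simply fails when $e(G)=n^{1+\alpha}$. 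The missing idea is the mechanism by which polylogarithmic (or constant-fraction) edge losses are traded against a \emph{polynomial} drop in the number of vertices: the standard route is to pass to subgraphs extremal for a density functional of the form $e(H)/v(H)^{1+\gamma}$ with $\gamma$ chosen slightly below $\alpha$, which simultaneously yields minimum-degree control and guarantees that whenever edges are lost the vertex count shrinks enough for the relative density to increase; it is this trade-off, not the count of iterations alone, that produces the exponent $\alpha\frac{1-\alpha}{1+\alpha}$ and the stated $K$. Without this (or an equivalent device), the recurrence you propose does not close, and the lower bound on $m$ together with the clean constant $\frac{2}{5}$ cannot be extracted from your sketch.
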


For convenience, we prove a slight variant of the above result.

\begin{lemma} \label{lemmaESvariant}
	Let $0<\alpha<1$ and let $n$ be a positive integer that is sufficiently large as a function of $\alpha$. Let $G$ be a graph on $n$ vertices with $e(G)\geq n^{1+\alpha}$. Then $G$ contains a subgraph $G''$ on $m\geq n^{\alpha\frac{1-\alpha}{1+\alpha}}$ vertices such that $e(G'')\geq \frac{1}{3}m^{1+\alpha}$ and $\Delta(G'')\leq Km^{\alpha}$, where $K=10\cdot 2^{\frac{1}{\alpha^2}+1}$.
\end{lemma}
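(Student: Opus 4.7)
My plan is to apply Lemma \ref{lemmaES} to $G$ to obtain a $K$-almost-regular subgraph $G'$ on $m \geq n^{\alpha(1-\alpha)/(1+\alpha)}$ vertices with $e(G') \geq \frac{2}{5} m^{1+\alpha}$, and then perform a mild post-processing step. Write $\delta := \delta(G')$ and $\Delta := \Delta(G')$, so that $\Delta \leq K\delta$.

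If it already happens that $\Delta \leq K m^\alpha$, I simply set $G'' := G'$; the conclusion is immediate since $\frac{2}{5} m^{1+\alpha} \geq \frac{1}{3} m^{1+\alpha}$. Otherwise $\Delta > K m^\alpha$, and combined with $\Delta \leq K\delta$ this forces $\delta > m^\alpha$, and hence $e(G') \geq m\delta/2 > m^{1+\alpha}/2$. In this regime $G'$ has far more edges than the conclusion requires, so there is plenty of room to throw some away.

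In this second regime I would sparsify randomly: form $G''$ by keeping each edge of $G'$ independently with probability $p := (1-\eta) m^\alpha/\delta < 1$ for a small fixed parameter $\eta \in (0, 1/10)$. Linearity of expectation gives $\mathbb{E}[e(G'')] = p\, e(G') \geq p \cdot m\delta/2 = (1-\eta) m^{1+\alpha}/2$, and for every $v$, $\mathbb{E}[d_{G''}(v)] = p\, d_{G'}(v) \leq pK\delta = (1-\eta) K m^\alpha$. Since $m \geq n^{\alpha(1-\alpha)/(1+\alpha)}$ is a positive power of $n$, the quantity $m^\alpha$ is much larger than $\log m$ once $n$ is large enough in terms of $\alpha$, so standard multiplicative Chernoff bounds (applied once to $e(G'')$ and then with a union bound over the $m$ vertices to $d_{G''}(v)$) show that with positive probability one has simultaneously $e(G'') \geq \frac{1}{3} m^{1+\alpha}$ and $d_{G''}(v) \leq K m^\alpha$ for every $v$. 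Any such realisation supplies the desired $G''$.

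The only delicate point is coordinating the constants: $\eta$ must be small enough that the expected edge count $(1-\eta) m^{1+\alpha}/2$ leaves a safe margin above $\frac{1}{3} m^{1+\alpha}$ after Chernoff losses, yet large enough that the multiplicative Chernoff deviation in $d_{G''}(v)$ is comfortably absorbed by the buffer $\eta K m^\alpha$. Both constraints are easily compatible because $Km^\alpha$ can be made an arbitrarily large power of $n$, which is precisely why the hypothesis demands $n$ to be sufficiently large as a function of $\alpha$.
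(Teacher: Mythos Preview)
Your proof is correct and follows essentially the same approach as the paper: apply Lemma~\ref{lemmaES} to obtain the $K$-almost-regular subgraph $G'$, then randomly sparsify and use Chernoff plus a union bound. The paper always sparsifies with probability $p=\frac{2}{5}m^{1+\alpha}/e(G')$ rather than splitting into cases, but this is only a cosmetic difference. (One minor wording slip: $Km^{\alpha}$ is a \emph{fixed} positive power of $n$ depending on $\alpha$, not an ``arbitrarily large'' one---but that is all you need for the concentration step.)
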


\begin{proof}
    By Lemma \ref{lemmaES}, $G$ has a $K$-almost regular subgraph $G'$ on $m\geq n^{\alpha\frac{1-\alpha}{1+\alpha}}$ vertices such that $e(G')\geq \frac{2}{5}m^{1+\alpha}$. Let $G''$ be a random subgraph of $G'$ obtained by keeping each edge of $G'$ independently with probability $\frac{2}{5}m^{1+\alpha}/e(G')$. Since $n$ is sufficiently large, so is $m$, hence by standard concentration inequalities, the probability that $e(G'')<\frac{1}{3}m^{1+\alpha}$ is at most $1/10$. Moreover, $\Delta(G')\leq K\delta(G')\leq 2Ke(G')/m$, so the expected degree in $G''$ of any $v\in V(G')$ is at most $\frac{4}{5}Km^{\alpha}$. Again, by standard concentration inequalities and the union bound, the probability that there exists $v\in V(G')$ whose degree in $G''$ is more than $Km^{\alpha}$ is at most $1/10$. Thus, with positive probability we have both $e(G'')\geq \frac{1}{3}m^{1+\alpha}$ and $\Delta(G'')\leq Km^{\alpha}$.
\end{proof}

Finally, we prove a simple lemma which shows that we can assume that no edge in our host graph is contained in a large proportion of all $4$-cycles.

\begin{lemma} \label{lemma:clean C4s}
	Let $G$ be a non-empty graph on $n$ vertices. Then $G$ has a spanning subgraph $G'$ with at least $e(G)/2$ edges such that if the number of $4$-cycles in $G'$ is $q$, then every edge in $G'$ belongs to at most $\frac{16\log n}{e(G')}q$ copies of $C_4$ in $G'$.
\end{lemma}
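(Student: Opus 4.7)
My approach is greedy edge deletion. Set $G_0:=G$, and while some edge of the current graph $G_i$ lies in strictly more than $\frac{16\log n}{e(G_i)}\cdot q(G_i)$ copies of $C_4$ in $G_i$ (where $q(G_i)$ denotes the number of $C_4$s in $G_i$), delete one such ``bad'' edge to obtain $G_{i+1}$; otherwise stop and output $G':=G_i$. By construction the output automatically satisfies the required inequality, so the entire task is to show that the procedure halts after at most $T:=\lfloor e(G)/2\rfloor$ deletions, so that $e(G')\geq e(G)-T\geq e(G)/2$.

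To analyze this I would track the pair $(e_i,q_i):=(e(G_i),q(G_i))$. Each deletion destroys more than $\tfrac{16\log n}{e_i}q_i$ copies of $C_4$, yielding the multiplicative bound $q_{i+1}\leq q_i\bigl(1-\tfrac{16\log n}{e_i}\bigr)$. Suppose the procedure performs $T$ deletions in a row, reaching $G_T$. Then the inequality $1-x\leq e^{-x}$ together with $e_i=e(G)-i$ gives
\[ q_T \;\leq\; q_0\exp\!\Bigl(-16\log n\sum_{i=0}^{T-1}\tfrac{1}{e(G)-i}\Bigr). \]
The inner sum equals $H_{e(G)}-H_{e(G)-T}$, bounded below by a positive absolute constant (asymptotically $\ln 2$, and crudely $T/e(G)\geq 1/3$ whenever $e(G)\geq 2$). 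Combined with the trivial bound $q_0\leq n^4$, the constant $16$ is comfortably large enough to force $q_T<1$, and since $q_T$ is a non-negative integer this gives $q_T=0$.

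But then $G_T$ contains no $C_4$ at all, so no edge of $G_T$ is bad, and the algorithm must actually have halted no later than step $T$; the output $G'$ therefore has $e(G')\geq e(G)-T\geq e(G)/2$. (Trivial cases with $e(G)$ too small to accommodate a $C_4$ admit $G':=G$ directly.) I expect the only real subtlety to be the multiplicative-accounting step above -- aligning the constant $16$ in the statement with the harmonic lower bound for $\sum_i 1/e_i$ and the crude bound $q_0\leq n^4$, together with the rounding in $T$ -- but no new ideas are needed and the remainder is routine.
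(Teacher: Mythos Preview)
Your proposal is correct and follows essentially the same approach as the paper: greedily delete an edge lying in too large a share of the current $4$-cycles, and show via multiplicative decay (together with $q_0\leq n^4$) that the $C_4$-count drops below $1$ before half the edges are removed. The only cosmetic difference is that the paper uses the fixed threshold $\frac{16\log n}{e(G)}$ rather than your variable $\frac{16\log n}{e(G_i)}$, which replaces your harmonic-sum estimate by a uniform geometric decay and yields termination after at most $\lceil e(G)/4\rceil$ steps; at the end the paper simply notes $\frac{16\log n}{e(G)}q\leq\frac{16\log n}{e(G')}q$.
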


\begin{proof}
	If $G$ has at most three edges, then it has no $4$-cycle and the statement is trivial, so let us assume that $e(G)\geq 4$.
	
	Let $G_0=G$. Now, for any $i\geq 0$, if there is an edge in $G_i$ which is contained in more than a proportion $\frac{16\log n}{e(G)}$ of all $4$-cycles in $G_i$, then choose such an edge $e$ and let $G_{i+1}=G_i-e$. Else, set $G'=G_i$.
	
	It is clear that if the number of $4$-cycles in $G'$ is $q$, then every edge in $G'$ belongs to at most $\frac{16\log n}{e(G)}q\leq \frac{16\log n}{e(G')}q$ copies of $C_4$, so it suffices to show that $e(G')\geq e(G)/2$. We claim that $e(G)-e(G')\leq \lceil e(G)/4\rceil$. Indeed, if $G=G_0$ has $t$ copies of $C_4$, then the number of $4$-cycles in $G_i$ is at most $t(1-\frac{16\log n}{e(G)})^i\leq t\exp(-16(\log n)i/e(G))=tn^{-16i/e(G)}$, which is less than one when $i\geq e(G)/4$. Thus, the process must terminate after at most $\lceil e(G)/4\rceil$ steps, and so $e(G)-e(G')\leq \lceil e(G)/4\rceil$. Since $e(G)\geq 4$, it follows that $e(G')\geq e(G)/2$.
\end{proof}

\subsection{The key definitions and deducing the main result}

As we mentioned in Subsection \ref{sec:overview}, we will find a collection of $8k$-cycles in $G$ with a certain property which is closely related to how many ways a $2k$-matching can be extended to a cycle from the family. The next two definitions describe the kind of families that we will want to find.

\begin{definition} \label{def:good}
	Let $G$ be a graph, let $k$ be a positive integer and let $\beta>0$. We call a set $\mathcal{C}\subset V(G)^{8k}$ \emph{$\beta$-good} if there is a positive real number $s$ such that
	\begin{itemize}
		\item for each $\vx=(x_1,x_2,\dots,x_{8k})\in \mathcal{C}$, the vertices $x_1,x_2,\dots,x_{8k}$ are all distinct and $x_ix_{i+1}\in E(G)$ for all $i\in [8k]$ (here and below, the indices are considered modulo $8k$),
		\item for each $\vy\in \cC$ and $i\in \lbrack 8k\rbrack$, there are at most $s$ choices for $\vx\in \cC$ such that $x_j=y_j$ for every $j\neq i$ and
		\item \sloppy for each $1\leq i\leq 8k$, there are at most $\beta|\cC|/(16ks)$ choices for the vertices $y_1,y_2,\dots,y_i,y_{i+3},y_{i+4},\dots,y_{8k}\in V(G)$ such that there exists $\vx\in \cC$ with $x_j=y_j$ for all $j\not \in \{i+1,i+2\} \mod 8k$. 
	\end{itemize} 
\end{definition}

\begin{definition} \label{def:nice}
	Let $G$ be a graph, let $k$ be a positive integer and let $\beta>0$. We call a set $\mathcal{C}\subset V(G)^{8k}$ \emph{$\beta$-nice} if
	\begin{itemize}
		\item for each $\vx=(x_1,x_2,\dots,x_{8k})\in \mathcal{C}$, the vertices $x_1,x_2,\dots,x_{8k}$ are all distinct and $x_ix_{i+1}\in E(G)$ for all $i\in [8k]$ and
		\item for each $1\leq i\leq 8k$,  $y_1,y_2,\dots,y_i,y_{i+3},y_{i+4},\dots,y_{8k}\in V(G)$ and $u\in V(G)$, amongst the elements $\vx\in \mathcal{C}$ with $x_j=y_j$ for all $j\not \in \{i+1,i+2\}\mod 8k$, at most $\beta$ proportion has $x_{i+1}=u$ or $x_{i+2}=u$.
	\end{itemize} 
\end{definition}

The proof of Theorem \ref{thm:turan of Hkl} consists of the following three steps:
\begin{enumerate}
    \item An $n$-vertex graph with at least $\frac{1}{6}n^{4/3+\eps}$ edges and maximum degree $O(n^{1/3+\eps})$ in which no edge belongs to a large proportion of the $4$-cycles contains an $n^{-\eps/2}$-good collection of $8k$-cycles. \label{Step1}
    \item A $\beta$-good collection of $8k$-cycles contains a $\beta$-nice collection of $8k$-cycles. \label{Step2}
    \item The existence of an $n^{-\eps/2}$-nice collection of $8k$-cycles implies the existence of $H_{k,\ell}$ as a subgraph. \label{Step3}
\end{enumerate}

Step \ref{Step1} is the most complicated of the three steps and relies on the following two lemmas, whose proofs will be given in Subsections \ref{sec:fewcycles} and \ref{sec:manycycles}, respectively. The first one deals with the case when there are few $4$-cycles in the graph, while the second one applies when there are many $4$-cycles.

\begin{lemma} \label{lemma:few4cycles}
	Let $\eps,K>0$ be constants and let $k\geq 1/\eps$. Let $n$ be sufficiently large in terms of $\eps$, $K$ and $k$ and let $G$ be an $n$-vertex graph with $e(G)\geq \frac{1}{6}n^{4/3+\eps}$ and $\Delta(G)\leq Kn^{1/3+\eps}$. Assume that each edge is contained in at most $n^{1/3+2\eps}$ copies of $C_4$. Then there is a non-empty $n^{-\eps/2}$-good set $\mathcal{C}\subset V(G)^{8k}$.
\end{lemma}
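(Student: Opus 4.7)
The plan is to construct $\cC$ in three phases: supersaturation to produce a large initial pool of $8k$-cycles, codegree-restriction to enforce condition (ii) of Definition \ref{def:good} with a small $s$, and a cleaning step to enforce condition (iii). First, Lemma \ref{lemma:supersaturation} applied to $C_{8k}$ yields at least $c_1(k)\,n^{8k/3+8k\eps}$ copies in $G$, since $k\ge 1/\eps$ ensures $e(G)\ge\tfrac{1}{6}n^{4/3+\eps}\gg n^{1+1/(4k)}$; let $\cC_0\subseteq V(G)^{8k}$ denote the set of labelled distinct-vertex $8k$-cycle tuples, so that $|\cC_0|\ge c_1'(k)\,n^{8k/3+8k\eps}$. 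Next, choose a small constant $c_0=c_0(k,K)>0$, set $s=c_0\,n^{5\eps/2}$, and let
\[
\cC_1=\{\vx\in\cC_0:\ d_G(x_{i-1},x_{i+1})\le s\ \text{for every}\ i\in [8k]\}.
\]
Inside $\cC_1$ the single-swap count at any position is at most the relevant codegree, hence at most $s$, so condition (ii) holds with this $s$ for any $\cC\subseteq \cC_1$. The aim is to show $|\cC_1|\ge|\cC_0|/2$; I would use the bound $\sum_{(u,v)} d_G(u,v)^2=O(n^{5/3+3\eps})$ (a consequence of the $C_4$ hypothesis) and apply Lemma \ref{lemma:bipartite with vertices} to a bipartite subgraph of $G$ (obtained from $G$ while preserving the per-edge $C_4$ bound, via Lemma \ref{lemma:clean C4s}) with the relation $u\sim v\iff d_G(u,v)>s$, to bound the number of homomorphic $C_{8k}$'s having a conflicting pair at some positions $(i-1,i+1)$ by a small fraction of $|\cC_0|$.

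Third, run a cleaning on $\cC_1$ with threshold $T:=16ks/\beta=16kc_0\,n^{3\eps}$: iteratively remove any $\vx$ for which some partial configuration has fewer than $T$ completions in the current collection. Each removal is charged to one partial configuration, and each partial configuration can absorb at most $T$ removals, so the total number removed is at most $8kTP_{\max}$, where $P_{\max}\le n\Delta^{8k-3}\le K^{8k-3} n^{8k/3+(8k-3)\eps}$. A routine computation gives
\[
8k\,T\,P_{\max}\le 128 k^2 c_0 K^{8k-3}\,n^{8k/3+8k\eps}\le |\cC_1|/2
\]
once $c_0$ is chosen sufficiently small as a function of $k, K$, and $c_1'$. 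The surviving $\cC$ is non-empty, still satisfies (ii) with parameter $s$, and by the termination condition every partial configuration at every position has at least $T=16ks/\beta$ completions in $\cC$, which gives (iii) for $\beta=n^{-\eps/2}$. Hence $\cC$ is $n^{-\eps/2}$-good.

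\textbf{Main obstacle.} The choice $s=c_0\,n^{5\eps/2}$ is essentially forced: condition (iii) requires the average completion per partial configuration to be at least $16ks/\beta$, and the bounds $|\cC_0|\ge c_1'\,n^{8k/3+8k\eps}$ and $P_{\max}\le n\Delta^{8k-3}$ pin this average down to $\Theta(n^{3\eps})$. So Phase 2 must restrict to cycles in which every codegree at positions $(i-1,i+1)$ is at most $n^{5\eps/2}$, which is far below the crude codegree bound $n^{1/3+2\eps}$ coming directly from the $C_4$ hypothesis. The naive estimate—multiplying the fraction of high-codegree pairs by the maximum number of cycles using one—loses roughly a factor of $n^{1/3}$, so the quantitative step in Phase 2 must exploit the $C_4$ hypothesis more carefully via Lemma \ref{lemma:bipartite with vertices} (perhaps after an additional regularisation of $G$ using Lemma \ref{lemmaESvariant} so that $\Delta_1, \Delta_2, s_1, s_2$ take favourable values). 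This is where I expect the proof to need its most delicate bookkeeping.
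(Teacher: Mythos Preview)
Your three-phase outline is close in spirit to the paper's argument, but Phase~2 has a genuine gap, and Phase~3 is unnecessary.

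\medskip
\textbf{Phase 2: wrong lemma.} You propose to apply Lemma~\ref{lemma:bipartite with vertices} with the vertex relation $u\sim v\Longleftrightarrow d_G(u,v)>s$. For that lemma you would need, for \emph{every} $u\in V$ and every $v\in X_1$, a uniform bound $s_1$ on the number of neighbours $w$ of $v$ satisfying $d_G(u,w)>s$. The hypothesis that each edge lies in at most $n^{1/3+2\eps}$ copies of $C_4$ only controls this quantity when $uv$ is an edge (then the high-codegree $w$'s each contribute many $C_4$'s through $uv$); for arbitrary $u$ there is no such control. So Lemma~\ref{lemma:bipartite with vertices} does not apply with the parameters you want, and the extra regularisation you hint at does not fix this, since the obstruction is structural rather than numerical.

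The paper instead uses Lemma~\ref{lemma:simple with edges}, the \emph{edge} relation lemma. Define $e\approx f$ when $e$ and $f$ share exactly one vertex and the remaining two endpoints have codegree exceeding $n^{2\eps}$. For $e=uw$, the neighbours $z$ of $w$ with $uw\approx wz$ are precisely those $z\in N(w)$ with $d(u,z)>n^{2\eps}$; each such $z$ contributes at least $n^{2\eps}-1$ copies of $C_4$ through the edge $uw$, so the per-edge $C_4$ bound gives at most $2n^{1/3}$ such $z$. Lemma~\ref{lemma:simple with edges} with $s_{\mathrm{edge}}=2n^{1/3}$ then shows that all but a negligible fraction of homomorphic $8k$-cycles satisfy $d(x_{i-1},x_{i+1})\le n^{2\eps}$ for every $i$ (this is Lemma~\ref{lemma:tuples with small codegree}). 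Note the resulting codegree threshold is $s=n^{2\eps}$, smaller than your $c_0 n^{5\eps/2}$, which makes the arithmetic in Phase~3 a matter of polynomial slack rather than constant-chasing.

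\medskip
\textbf{Phase 3: superfluous.} Condition~(iii) in Definition~\ref{def:good} is simply an upper bound on the number of partial $(8k-2)$-tuples that arise from $\cC$; it is not a lower bound on completions. Any such partial tuple is a walk of length $8k-3$ in $G$, so there are at most $n\Delta(G)^{8k-3}\le K^{8k-3}n^{(1/3+\eps)8k-3\eps}$ of them. Comparing this directly to $\beta|\cC|/(16ks)$ with $|\cC|\ge 6^{-8k}n^{(1/3+\eps)8k}$, $s=n^{2\eps}$ and $\beta=n^{-\eps/2}$ gives condition~(iii) immediately, with a factor $n^{\eps/2}$ to spare. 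Your cleaning process would also work, but the very inequality you write to show that cleaning removes at most half of $\cC_1$ already implies condition~(iii) for $\cC_1$ itself, so the iteration is redundant. (The cleaning you describe is essentially the content of Lemma~\ref{lem:good implies nice}, which turns a good family into a nice one; it is not needed to establish goodness.)
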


\begin{lemma} \label{lemma:many4cycles}
	Let $\eps,K>0$ be constants with $\eps<1/6$ and let $k\geq 1/\eps$. Let $n$ be sufficiently large in terms of $\eps$, $K$ and $k$ and let $G$ be an $n$-vertex graph with $\Delta(G)\leq Kn^{1/3+\eps}$. Assume that $G$ has $q\geq \frac{n^{5/3+3\eps}}{96\log n}$ copies of $C_4$ such that every edge is contained in at most $\frac{96\log n}{n^{4/3+\eps}}q$ copies of $C_4$. Then there is a non-empty $n^{-\eps/2}$-good set $\mathcal{C}\subset V(G)^{8k}$.
\end{lemma}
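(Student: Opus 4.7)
The plan is to exploit the abundance of $4$-cycles to build a dense auxiliary graph on $E(G)$, extract long cycles there by supersaturation, lift them to $8k$-cycles of $G$, and prune to obtain an $n^{-\eps/2}$-good collection.

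First, double-counting edge--$C_4$ incidences ($4q = \sum_{e \in E(G)} |\{C_4 \ni e\}|$), combined with the assumption that no edge lies in a large proportion of the $C_4$'s, gives $e(G) \geq n^{4/3+\eps}/O(\log n)$. Next, define the auxiliary graph $\cD$ on vertex set $E(G)$ by joining two edges of $G$ whenever they are opposite edges of some $C_4$ in $G$. Then $|E(\cD)| = 2q$ and, by the edge hypothesis, $\Delta(\cD) \leq \tfrac{96\log n}{n^{4/3+\eps}} q$, so $\cD$ is almost-regular with average degree $\Theta(q/e(G))$. The hypotheses $k \geq 1/\eps$ and $\eps < 1/6$ ensure that $|E(\cD)| \gg |V(\cD)|^{1+1/(2k)}$, whence Lemma \ref{lemma:supersaturation} produces many copies of $C_{4k}$ in $\cD$.

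Each such $C_{4k}$ is a cyclic sequence $e_1, \ldots, e_{4k}$ of $G$-edges with consecutive pairs opposite in a common $C_4$. Choosing a consistent orientation of each connecting $C_4$ lifts it to a labeled closed $8k$-walk $u_1 v_1 u_2 v_2 \cdots u_{4k} v_{4k}$ in $G$, with ``main edges'' $u_j v_j = e_j$ and ``crossings'' $v_j u_{j+1}$ drawn from the connecting $C_4$. I would then discard walks with repeated vertices (a standard count shows these form a negligible fraction), and further prune any cycle containing a ``diagonal'' codegree $d_G(u_j, u_{j+1})$ or $d_G(v_j, v_{j+1})$ exceeding a carefully chosen threshold $s$; Lemma \ref{lemma:simple with edges}, applied with the relation ``contains a heavy diagonal,'' keeps this loss small. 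The surviving family is the candidate $\cC$.

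Condition (i) of Definition \ref{def:good} is built in, and condition (ii) holds with the codegree threshold $s$: a single-coordinate swap at $u_j$ or $v_j$ requires a common neighbour of two cycle-adjacent vertices, whose count is at most $s$ by the pruning. The crux is condition (iii): the image of projecting out any two consecutive coordinates must have size at most $n^{-\eps/2}|\cC|/(16ks)$, equivalently the average fibre must be at least $16ks \cdot n^{\eps/2}$. The $C_4$-chained construction delivers this, because a two-consecutive-coordinate slot corresponds to replacing a main edge (or a crossing together with an adjacent endpoint), and the many-$C_4$ regime guarantees many parallel completions that are simultaneously $\cD$-adjacent to the surrounding edges of the cycle.

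The main obstacle is the simultaneous control of conditions (ii) and (iii), which pull in opposite directions: small $s$ helps (ii) but restricts the two-coordinate completion count needed for (iii). The balance is made possible by the ``no heavy edge'' assumption, which flattens the $C_4$-distribution so that typical diagonal codegrees are moderate while parallel $C_4$-alternatives at each slot remain abundant. A secondary technical burden is tracking parameters through the supersaturation, orientation-choice, distinctness, and pruning steps so that $|\cC|$ stays large enough for condition (iii) to hold with $\beta = n^{-\eps/2}$.
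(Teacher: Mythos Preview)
Your auxiliary-graph-on-edges route is genuinely different from what the paper does, and the gap is in condition~(iii) of Definition~\ref{def:good}.  In your construction $|V(\cD)|=e(G)\approx n^{4/3+\eps}$ and, by supersaturation, $|\cC|$ is of order $q^{4k}/e(G)^{4k}$ (up to logs and pruning losses).  Projecting out a ``main edge'' slot leaves an oriented $(4k-2)$-walk in $\cD$, of which there are at most about $e(G)\,\Delta(\cD)^{4k-2}$; with $\Delta(\cD)\le\frac{96\log n}{n^{4/3+\eps}}q$ this gives
\[
\frac{|\cC|}{|\text{image of projection}|}\;\lesssim\;\frac{q^{2}}{e(G)^{3}}\;\approx\;n^{-2/3+3\eps}
\]
when $q$ sits at its lower bound $n^{5/3+3\eps}/(96\log n)$.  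Since $\eps<1/6$ this is $o(1)$, so you cannot get the required inequality $|\text{image}|\le n^{-\eps/2}|\cC|/(16ks)$ for any $s\ge 2$ (and $s\ge2$ is forced, because the diagonals of every $C_4$ have codegree at least $2$).  Your informal claim that ``the many-$C_4$ regime guarantees many parallel completions'' does not survive this count: the average fibre of the projection is simply too small.

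The paper avoids this by \emph{first pinning a vertex} $v$ that lies in many $C_4$'s (Lemma~\ref{lemma:find 4cycles}) and working with triples $(x,y,z)$ such that $vxyz$ is a $C_4$ with both diagonal codegrees at most some dyadic threshold $s$.  The auxiliary graph is then on $N(v)$, a set of size at most $\Delta(G)=O(n^{1/3+\eps})$, and supersaturation is applied there (Lemma~\ref{lemma:find longer cycle}).  The point is that every second coordinate of each $8k$-cycle in $\cC$ is forced into $N(v)$; when you project out two consecutive coordinates the remaining structure starts from a vertex of $N(v)$ rather than from an arbitrary edge of $G$, saving a factor of roughly $n$ in the partial count.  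That saving is exactly what makes condition~(iii) hold.  A secondary issue with your plan is the pruning step: Lemma~\ref{lemma:simple with edges} bounds the number of bad homomorphic $8k$-cycles in all of $G$ in terms of $\hom(C_{8k},G)$, which can be much larger than your $|\cC|$, so it does not directly show that few members of your particular $\cC$ are bad.
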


The next two lemmas perform Step \ref{Step2} and Step \ref{Step3}, respectively.

\begin{lemma} \label{lem:good implies nice}
	Let $G$ be a graph, let $k$ be a positive integer and let $\beta>0$. If $\mathcal{C}_0\subset V(G)^{8k}$ is non-empty and $\beta$-good, then it has a non-empty subset which is $\beta$-nice.
\end{lemma}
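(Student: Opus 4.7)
The plan is to obtain the desired subset from $\cC_0$ by an iterative deletion procedure. Let $s$ be the constant from Definition \ref{def:good}. I would initialise $\cC := \cC_0$ and, while $\cC$ is not $\beta$-nice, pick any witness $(i, H, u)$ of the failure, so that $H = (y_1,\dots,y_i,y_{i+3},\dots,y_{8k})$ is a position-$i$ ``hole'' with $\cC(H) := \{\vx \in \cC : x_j = y_j \text{ for } j \not\in \{i+1,i+2\}\}$ nonempty and $u$ satisfies $|\{\vx \in \cC(H) : x_{i+1} = u \text{ or } x_{i+2} = u\}| > \beta |\cC(H)|$; then delete the entire set $\cC(H)$ from $\cC$. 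The procedure terminates because $|\cC|$ strictly decreases at every step, and the final $\cC$ is automatically $\beta$-nice (the cycle/distinct-coordinate bullet is inherited from $\cC_0$). The only thing to check is that $\cC$ does not become empty.

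The key observation is that each deletion step removes strictly fewer than $2s/\beta$ cycles. This comes from the second bullet of $\beta$-goodness: picking any $\vy \in \cC_0(H)$ with $y_{i+1} = u$ and noting that only coordinate $i+2$ is free shows that at most $s$ elements of $\cC_0(H)$ satisfy $x_{i+1} = u$, and symmetrically at most $s$ satisfy $x_{i+2} = u$. Since $\cC(H) \subseteq \cC_0(H)$, the witness inequality forces $\beta |\cC(H)| < 2s$, i.e.\ $|\cC(H)| < 2s/\beta$.

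The final step is a charging argument. I would charge each deleted cycle to the pair $(i,H)$ whose deletion triggered its removal. Each such pair is charged at most once, since after its deletion $H$ is no longer a position-$i$ hole of $\cC$; and by the third bullet of $\beta$-goodness there are at most $\beta |\cC_0|/(16ks)$ position-$i$ holes in $\cC_0$. Therefore the cycles charged to any given position $i$ number at most $(\beta |\cC_0|/(16ks)) \cdot (2s/\beta) = |\cC_0|/(8k)$, with strict inequality whenever at least one deletion occurs at position $i$. Summing over the $8k$ positions yields a total strictly below $|\cC_0|$ if any deletion happens at all (and otherwise $\cC_0$ is already $\beta$-nice), so the terminal $\cC$ is non-empty. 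The main obstacle is precisely this accounting: the factor $16k$ in Definition \ref{def:good} is calibrated so that the losses across all $8k$ positions together do not exhaust $\cC_0$.
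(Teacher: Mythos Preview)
Your proof is correct and follows essentially the same approach as the paper's: an iterative deletion of ``bad'' holes, bounding each deletion by $2s/\beta$ via the second bullet of $\beta$-goodness, and bounding the number of deletions via the third bullet. The only cosmetic difference is the deletion trigger: the paper removes a hole whenever it has fewer than $2\beta^{-1}s$ elements (and then checks niceness at the end), whereas you remove a hole only when it actively witnesses a failure of niceness and then deduce it must have fewer than $2s/\beta$ elements; the accounting is identical.
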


\begin{proof}
	Let $s$ be the positive real satisfying the three properties in Definition \ref{def:good}. Define $\mathcal{C}_0\supset \mathcal{C}_1\supset \mathcal{C}_2 \supset\dots$ recursively as follows. Having defined $\mathcal{C}_0,\mathcal{C}_1,\dots,\mathcal{C}_t$, if there are some $\vy\in \mathcal{C}_t$ and $i\in \lbrack 8k\rbrack$ such that the number of $\vx\in \mathcal{C}_t$ with $x_j=y_j$ for all $j\not \in \{i+1,i+2\}$ is less than $2\beta^{-1} s$, then let
	\begin{equation}
	\mathcal{C}_{t+1}=\mathcal{C}_t\setminus \{\vx\in \mathcal{C}_t: x_j=y_j \text{ for all } j\not \in \{i+1,i+2\}\} \label{eqn:removal}
	\end{equation}
	for some such $\vy$ and $i$. Else, terminate the recursion and let $\mathcal{C}=\mathcal{C}_t$.
	
	We claim that $\mathcal{C}$ is non-empty. To see this, choose $r$ such that $\mathcal{C}=\mathcal{C}_r$ and note that $|\mathcal{C}_t\setminus \mathcal{C}_{t+1}|< 2\beta^{-1} s$ for each $t<r$. Furthermore, $r\leq 8k \cdot \beta|\cC_0|/(16ks)$ because there are at most $8k$ ways to choose $i$ in (\ref{eqn:removal}), there are at most $\beta|\cC_0|/(16ks)$ ways to choose the vertices $y_1,y_2,\dots,y_i,y_{i+3},y_{i+4},\dots,y_{8k}$ in (\ref{eqn:removal}) (by the third condition in Definition \ref{def:good}) and any such choices feature at most once in the process. Hence, $|\mathcal{C}_0\setminus \mathcal{C}|< 8k \beta|\cC_0|/(16ks)\cdot 2\beta^{-1} s=|\mathcal{C}_0|$. Thus, $\mathcal{C}\neq \emptyset$.
	
	We now verify that $\mathcal{C}$ is $\beta$-nice. Indeed, for any $1\leq i\leq 8k$ and $y_1,y_2,\dots,y_i,y_{i+3},y_{i+4},\dots,y_{8k}\in V(G)$, if there is any $\vx\in \mathcal{C}$ such that $x_j=y_j$ for all $j\not \in \{i+1,i+2\}$, then there are at least $2\beta^{-1} s$ such $\vx\in \cC$. But by the second condition in Definition \ref{def:good}, for any $u\in V(G)$, there are at most $s$ choices for $\vx\in \cC$ such that $x_j=y_j$ for every $j\not \in \{i+1,i+2\}$ and $x_{i+1}=u$. Similarly, there are at most $s$ choices for $\vx\in \cC$ such that $x_j=y_j$ for every $j\not \in \{i+1,i+2\}$ and $x_{i+2}=u$. Thus, $\mathcal{C}$ is indeed $\beta$-nice.
\end{proof}

\begin{lemma} \label{lemma:nice implies subgraph}
	Let $\delta>0$ be a real number and let $k$ and $\ell\geq 8k/\delta$ be positive integers. Let $n$ be sufficiently large in terms of $\delta$, $k$ and $\ell$ and let $G$ be an $n$-vertex graph which has a non-empty $n^{-\delta}$-nice set $\mathcal{C}\subset V(G)^{8k}$. Then $G$ contains $H_{k,\ell}$ as a subgraph.
\end{lemma}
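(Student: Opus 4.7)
The plan is to execute the strategy outlined in Section~\ref{sec:overview}. Form the auxiliary graph $\cG$ whose vertex set consists of all labelled $2k$-matchings in $G$, i.e.\ tuples $(x_1,\dots,x_{4k})\in V(G)^{4k}$ of distinct vertices with $x_{2i-1}x_{2i}\in E(G)$ for $1\leq i\leq 2k$, and in which $\vx,\vy\in V(\cG)$ are joined by an edge precisely when the interleaved $8k$-tuple
\[
(x_1,x_2,y_3,y_4,x_5,x_6,y_7,y_8,\dots,x_{4k-3},x_{4k-2},y_{4k-1},y_{4k},x_{4k},x_{4k-1},y_{4k-2},y_{4k-3},\dots,x_4,x_3,y_2,y_1)
\]
belongs to $\cC$. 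Equip $V(\cG)$ with the symmetric relation $\mathbf{u}\sim\vx$ meaning ``the coordinate sets of $\mathbf{u}$ and $\vx$ intersect''. The goal is to invoke Lemma~\ref{lemma:proportion of neighbours} with $\ell$ in place of $k$ and thereby produce a homomorphic $2\ell$-cycle $\vx^1,\dots,\vx^{2\ell}$ in $\cG$ whose vertices are pairwise non-$\sim$-related. Since each $\vx^j$ already has distinct entries and distinct $\vx^j$'s have disjoint coordinate sets, all $8k\ell$ vertices $x_{i,j}$ are then distinct, and reading off the edges arising from the matchings $\vx^j$ together with the interleavings of consecutive $(\vx^j,\vx^{j+1})$ yields precisely a copy of $H_{k,\ell}$.

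The bulk of the work is to verify the hypothesis of Lemma~\ref{lemma:proportion of neighbours}. The $x$-positions of the interleaved $8k$-tuple occupy the $2k$ pairs of consecutive indices $(1,2),(5,6),\dots,(4k-3,4k-2),(4k+1,4k+2),\dots,(8k-3,8k-2)$ inside $[8k]$. Fix $\mathbf{u},\vy\in V(\cG)$ and a vertex $w\in V(G)$. For each such pair $(p,p+1)$, partitioning the $\cG$-neighbours of $\vy$ by the values of the remaining $4k-2$ entries of $\vx$ is equivalent to fixing $8k-2$ positions of the underlying $\cC$-element, namely $[8k]\setminus\{p,p+1\}$; the $n^{-\delta}$-nice condition, applied with $i=p-1\bmod 8k$, then asserts that at most an $n^{-\delta}$ fraction of the completions put $w$ at position $p$ or $p+1$. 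Summing across partition classes gives at most $n^{-\delta}d_{\cG}(\vy)$ neighbours of $\vy$ whose $\vx$-entry at this pair equals $w$; summing over the $2k$ pairs gives at most $2k\,n^{-\delta}d_{\cG}(\vy)$ neighbours $\vx$ with $w\in\vx$; and summing over the at most $4k$ coordinates of $\mathbf{u}$ bounds the number of neighbours $\vx$ with $\vx\sim\mathbf{u}$ by $\alpha\,d_{\cG}(\vy)$ with $\alpha=8k^2 n^{-\delta}$.

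To check the quantitative condition of Lemma~\ref{lemma:proportion of neighbours}, note that $|V(\cG)|\leq e(G)^{2k}\leq n^{4k}$ and $\ell\geq 8k/\delta$ give $|V(\cG)|^{1/\ell}\leq n^{\delta/2}$, so the required $\alpha<(2^{20}\ell^3(\log|V(\cG)|)^4|V(\cG)|^{1/\ell})^{-1}$ becomes $\operatorname{poly}(k,\ell,\log n)\,n^{-\delta/2}<1$, which holds once $n$ is large enough. The lemma then yields the desired $\sim$-free $2\ell$-cycle in $\cG$, completing the proof.

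The main subtlety I anticipate is the bookkeeping: one must verify that all $2k$ $x$-pairs, including the two at the ``corners'' where the interleaving reverses direction, are genuine pairs of consecutive indices in $[8k]$ cyclically so that each is eligible for the nice condition, and then confirm from the interleaving pattern that a $\sim$-free $2\ell$-cycle in $\cG$ really realises exactly the edge set of $H_{k,\ell}$, with no extra or missing edges.
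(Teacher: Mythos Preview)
Your proposal is correct and follows essentially the same route as the paper: build the auxiliary graph $\cG$, use the $n^{-\delta}$-nice condition to verify the hypothesis of Lemma~\ref{lemma:proportion of neighbours} for the ``share a coordinate'' relation, and read off $H_{k,\ell}$ from the resulting $\sim$-free homomorphic $2\ell$-cycle. The only slip is that your edge relation as written is not symmetric in $\vx$ and $\vy$; the paper takes $V(\cG)=V(G)^{4k}$ and declares an edge when \emph{either} interleaving lies in $\cC$, which is why its constant is $32k^2$ rather than your $8k^2$ --- your bound on $\sim$-bad neighbours only covers the case where the neighbour sits in the $x$-role, and the symmetric case must be added (a harmless factor absorbed by ``$n$ sufficiently large'').
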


\begin{proof}
	We define an auxiliary graph $\cG$ as follows. The vertex set of $\cG$ is $V(G)^{4k}$. Moreover, $(y_1,y_2,\dots,y_{4k})$ and $(z_1,z_2,\dots,z_{4k})$ are joined by an edge in $\cG$ if and only if
	\begin{align}
		(&y_1,y_2,z_3,z_4,y_5,y_6,z_7,z_8,\dots,y_{4k-3},y_{4k-2},z_{4k-1},z_{4k}, \nonumber \\
		&y_{4k},y_{4k-1},z_{4k-2},z_{4k-3},y_{4k-4},y_{4k-5},z_{4k-6},z_{4k-7},\dots,y_4,y_3,z_2,z_1)\in \mathcal{C} \label{eqn:first containment}
	\end{align}
	or
	\begin{align}
		(&z_1,z_2,y_3,y_4,z_5,z_6,y_7,y_8,\dots,z_{4k-3},z_{4k-2},y_{4k-1},y_{4k}, \nonumber \\
	&z_{4k},z_{4k-1},y_{4k-2},y_{4k-3},z_{4k-4},z_{4k-5},y_{4k-6},y_{4k-7},\dots,z_4,z_3,y_2,y_1)\in \mathcal{C}. \label{eqn:second containment}
	\end{align}
	
	\noindent \emph{Claim.} For any $\vy\in V(\cG)$, and any $u\in V(G)$, the number of $\vz\in N_{\cG}(\vy)$ for which there is an $i\in \lbrack 4k\rbrack$ with $z_i=u$ is at most $8kn^{-\delta}d_{\cG}(\vy)$.
	
	\medskip
	
	\noindent \emph{Proof of Claim.} By symmetry, it suffices to prove that the number of $\vz\in V(G)^{4k}$ for which (\ref{eqn:second containment}) holds and $z_1=u$ is at most $n^{-\delta}d_{\cG}(\vy)$. By the second condition in Definition \ref{def:nice}, for any fixed $z_3,z_4,\dots,z_{4k}\in V(G)$, at most $n^{-\delta}$ proportion of the pairs $(z_1,z_2)\in V(G)^2$ satisfying (\ref{eqn:second containment}) has $z_1=u$. This implies that at most $n^{-\delta}$ proportion of the $4k$-tuples $(z_1,z_2,\dots,z_{4k})\in V(G)^{4k}$ satisfying (\ref{eqn:second containment}) has $z_1=u$. Hence, there are at most $n^{-\delta}d_{\cG}(\vy)$ tuples $\vz\in V(G)^{4k}$ for which (\ref{eqn:second containment}) holds and $z_1=u$.
	$\Box$
	
	\medskip
	
	Define a binary relation $\sim$ over $V(\cG)$ by setting $\vy\sim \vz$ if and only if $y_i=z_j$ for some $i,j\in \lbrack 4k\rbrack$. The claim above implies (using that each $\vx\in V(\cG)$ has $4k$ coordinates) that for any $\vx,\vy\in V(\cG)$, $\vy$ has at most $32k^2n^{-\delta}d_{\cG}(\vy)$ neighbours $\vz$ with $\vx\sim \vz$. Since $\cG$ has $n^{4k}$ vertices and $32k^{2}n^{-\delta}<(2^{20}\ell^3(\log n^{4k})^4 (n^{4k})^{1/\ell})^{-1}$, Lemma \ref{lemma:proportion of neighbours} implies that $\cG$ contains a homomorphic $2\ell$-cycle $(\vx^1,\vx^2,\dots,\vx^{2\ell})$ with $\vx^i\not \sim \vx^j$ for all $i\neq j$. Then the coordinates of $\vx^1,\dots,\vx^{2\ell}$ together form the vertex set of a copy of $H_{k,\ell}$ in $G$, completing the proof.
\end{proof}

We are now in a position to complete the proof of our main result.

\begin{proof}[Proof of Theorem \ref{thm:turan of Hkl}]
    Let $n$ be sufficiently large in terms of $\eps, k$ and $\ell$. It suffices to prove that if $G$ is an $n$-vertex graph with at least $n^{4/3+\eps}$ edges, then $G$ contains $H_{k,\ell}$ as a subgraph. Let $\alpha=1/3+\eps$. By Lemma \ref{lemmaESvariant}, $G$ has a subgraph $G''$ on $m\geq n^{\alpha\frac{1-\alpha}{1+\alpha}}$ vertices such that $e(G'')\geq \frac{1}{3}m^{1+\alpha}$ and $\Delta(G'')\leq Km^{\alpha}$, where $K=10\cdot 2^{\frac{1}{\alpha^2}+1}$. By Lemma \ref{lemma:clean C4s}, $G''$ has a spanning subgraph $F$ with at least $e(G'')/2\geq \frac{1}{6}m^{4/3+\eps}$ edges such that if the number of $4$-cycles in $F$ is $q$, then every edge in $F$ belongs to at most $\frac{16\log m}{e(F)}q$ copies of $C_4$ in $F$. Since $e(F)\geq \frac{1}{6}m^{4/3+\eps}$, this means that every edge in $F$ is in at most $\frac{96 \log m}{m^{4/3+\eps}}q$ copies of $C_4$ in $F$.
    
    Assume that $q\leq \frac{m^{5/3+3\eps}}{96\log m}$. Then every edge in $F$ belongs to at most $m^{1/3+2\eps}$ copies of $C_4$ in $F$, so Lemma \ref{lemma:few4cycles}, Lemma \ref{lem:good implies nice} and Lemma \ref{lemma:nice implies subgraph} together imply that $F$ contains $H_{k,\ell}$ as a subgraph.
    
    Hence, we may assume that $q>\frac{m^{5/3+3\eps}}{96\log m}$. But then Lemma \ref{lemma:many4cycles}, Lemma \ref{lem:good implies nice} and Lemma~\ref{lemma:nice implies subgraph} together imply that $F$ contains $H_{k,\ell}$ as a subgraph.
\end{proof}

In the remaining two subsections, we prove Lemmas \ref{lemma:few4cycles} and \ref{lemma:many4cycles}.

\subsection{Few $4$-cycles: the proof of Lemma \ref{lemma:few4cycles}} \label{sec:fewcycles}

\begin{lemma} \label{lemma:tuples with small codegree}
	Let $\eps,K>0$ be constants and let $k\geq 2/\eps$. Let $n$ be sufficiently large in terms of $\eps$, $K$ and $k$ and let $G$ be an $n$-vertex graph with $e(G)\geq \frac{1}{6}n^{4/3+\eps}$ and $\Delta(G)\leq Kn^{1/3+\eps}$. Assume that each edge is contained in at most $n^{1/3+2\eps}$ copies of $C_4$. Then $G$ has at least $6^{-2k}n^{(1/3+\eps)2k}$ homomorphic $2k$-cycles $(x_1,x_2,\dots,x_{2k})$ such that $x_1,x_2,\dots,x_{2k}$ are distinct and $d(x_i,x_{i+2})\leq n^{2\eps}$ holds for each $1\leq i\leq 2k$ (as usual, indices are considered modulo $2k$).
\end{lemma}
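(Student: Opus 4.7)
The plan is to lower bound $\hom(C_{2k},G)$ via Sidorenko and then upper bound the hom $2k$-cycles that fail to be ``good'' (all coordinates distinct and $d(x_i, x_{i+2}) \leq n^{2\eps}$ for every $i$). These bad cycles split into \emph{degenerate} cycles (some $x_i = x_j$ with $i \neq j$) and \emph{non-degenerate bad} cycles (with $d(x_i, x_{i+2}) > n^{2\eps}$ for some $i$). Lemma~\ref{lemma:sidorenko} gives $\hom(C_{2k},G) \geq (2e(G)/n)^{2k} \geq 3^{-2k}n^{(1/3+\eps)2k}$, so it will suffice to bound each of the two bad sets by $\hom(C_{2k}, G)/4$ and subtract.

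To control the degenerate cycles, I will apply Lemma~\ref{lemma:bipartite with vertices} with $X_1 = X_2 = V$ and the equality relation $u \sim v \iff u = v$. Any vertex has at most one neighbor equal to a prescribed vertex, so $s_1 = s_2 = 1$ and $M = \Delta(G) \leq Kn^{1/3+\eps}$. The resulting bound $32 k^{3/2} M^{1/2} n^{1/(2k)} \hom(C_{2k},G)^{1-1/(2k)}$, after dividing by $\hom(C_{2k},G)^{1/(2k)} \geq n^{1/3+\eps}/3$, carries a prefactor of order $n^{1/(2k) - 1/6 - \eps/2}$, which tends to $0$ as $n\to\infty$ since $k \geq 2/\eps$, giving a bound of at most $\hom(C_{2k},G)/4$ for $n$ large.

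For the non-degenerate bad cycles, I will define a symmetric relation $\sim$ on $E(G)$ by declaring $e_1 \sim e_2$ if and only if $e_1$ and $e_2$ share exactly one vertex and the two remaining endpoints $a, b$ satisfy $d(a,b) > n^{2\eps}$. In a non-degenerate hom $2k$-cycle, the only pairs of cycle edges sharing a vertex are consecutive pairs, and $x_i x_{i+1} \sim x_{i+1} x_{i+2}$ is equivalent to $d(x_i, x_{i+2}) > n^{2\eps}$; thus the non-degenerate bad count is dominated by the quantity bounded in Lemma~\ref{lemma:simple with edges}. The hard part is bounding the parameter $s$: for an edge $uv \in E(G)$ and a vertex $w \in V$, the non-trivial case is $w \in \{u, v\}$, say $w = v$, where I must count $z \in N(v)$ with $d(u, z) > n^{2\eps}$. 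Observe that $\sum_{w \in N(v) \setminus \{u\}} (d(u, w) - 1)$ equals the number of $C_4$'s through $uv$, so by hypothesis this is at most $n^{1/3+2\eps}$; combined with $d(u), d(v) \leq Kn^{1/3+\eps}$ this yields $\sum_{z \in N(v)} d(u, z) = O(n^{1/3+2\eps})$, and Markov's inequality then gives $s = O(n^{1/3})$. Plugging into Lemma~\ref{lemma:simple with edges} produces a bound of order $k^{3/2} n^{1/3 + \eps/2 + 1/(2k)} \hom(C_{2k},G)^{1-1/(2k)}$; dividing by $\hom(C_{2k}, G)^{1/(2k)} \geq n^{1/3+\eps}/3$ leaves a prefactor of order $n^{1/(2k) - \eps/2}$, which is $o(1)$ because $k \geq 2/\eps > 1/\eps$, so this bound is also at most $\hom(C_{2k},G)/4$ for $n$ large.

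Combining, at least $\hom(C_{2k},G)/2 \geq 6^{-2k} n^{(1/3+\eps)2k}$ hom $2k$-cycles will be good (using $2 \cdot 3^{2k} \leq 6^{2k}$ for $k \geq 1$). The main obstacle is establishing $s = O(n^{1/3})$: this is the unique step that genuinely uses the ``few $C_4$'s per edge'' hypothesis, and it is exactly the saving of the factor $n^{1/3}$ over the naive bound from $\Delta$ alone that makes Lemma~\ref{lemma:simple with edges} beat Sidorenko's lower bound in this regime.
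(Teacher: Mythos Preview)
Your proposal is correct and follows essentially the same approach as the paper's proof: both use Lemma~\ref{lemma:bipartite with vertices} with the equality relation to bound degenerate cycles, and Lemma~\ref{lemma:simple with edges} with the edge relation ``share exactly one vertex and the non-shared endpoints have codegree $>n^{2\eps}$'' to bound the large-codegree cycles, deriving $s=O(n^{1/3})$ from the $C_4$-per-edge hypothesis via the identity $\sum_{z\in N(v)\setminus\{u\}}(d(u,z)-1)=\#\{C_4\text{ through }uv\}$. The only cosmetic difference is that you first restrict to non-degenerate cycles before applying Lemma~\ref{lemma:simple with edges}, whereas the paper applies both lemmas to all homomorphic $2k$-cycles and adds the bounds.
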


\begin{proof}
	Let $G=(V,E)$. Define a binary relation $\sim$ over $V$ by taking $v\sim v$ for every $v\in V$ and $u\not \sim v$ for any $u\neq v$. Moreover, define a binary relation $\approx$ over $E$ by taking $e\approx f$ if $e$ and $f$ share precisely one vertex and the other vertices from $e$ and $f$ have codegree more than $n^{2\eps}$.
	
	Let $e\in E$ and $w\in V$. We claim that $w$ has at most $2n^{1/3}$ neighbours $z\in V$ which satisfy $e\approx wz$. Indeed, if $w$ is not a vertex of $e$, then there can be at most two such vertices (the vertices in $e$), so we may assume that $e=uw$ for some $u\in V$. Since $e$ is contained in at most $n^{1/3+2\eps}$ copies of $C_4$, $w$ has at most $2n^{1/3}$ neighbours $z\in V$ with $d(u,z)>n^{2\eps}$, as claimed.
	
	Now by Lemma \ref{lemma:simple with edges} with $s=2n^{1/3}$, the number of homomorphic $2k$-cycles $(x_1,x_2,\dots,x_{2k})$ in $G$ such that $x_ix_{i+1}\approx x_jx_{j+1}$ for some $i\neq j$ is at most $64k^{3/2}n^{1/6}\Delta(G)^{1/2}n^{\frac{1}{2k}}\hom(C_{2k},G)^{1-\frac{1}{2k}}$. Furthermore, by Lemma \ref{lemma:bipartite with vertices} with $X_1=X_2=V$, $s_1=s_2=1$, and $\Delta_1=\Delta_2=\Delta(G)$, the number of homomorphic $2k$-cycles $(x_1,x_2,\dots,x_{2k})$ in $G$ such that $x_i\sim x_j$ for some $i\neq j$ is at most $32k^{3/2}\Delta(G)^{1/2}n^{\frac{1}{2k}}\hom(C_{2k},G)^{1-\frac{1}{2k}}$. Combining these two upper bounds, we conclude that the number of homomorphic $2k$-cycles $(x_1,x_2,\dots,x_{2k})$ in $G$ such that $x_i\sim x_j$ or $x_ix_{i+1}\approx x_jx_{j+1}$ for some $i\neq j$ is at most $96k^{3/2}n^{1/6}\Delta(G)^{1/2}n^{\frac{1}{2k}}\hom(C_{2k},G)^{1-\frac{1}{2k}}$. Since $e(G)\geq \frac{1}{6}n^{4/3+\eps}$, Lemma \ref{lemma:sidorenko} implies that $\hom(C_{2k},G)\geq 3^{-2k} n^{(1/3+\eps)2k}$ and so $\hom(C_{2k},G)^{\frac{1}{2k}}\geq \frac{1}{3}n^{1/3+\eps}$. On the other hand, observe that $96k^{3/2}n^{1/6}\Delta(G)^{1/2}n^{\frac{1}{2k}}\leq 96k^{3/2}n^{1/6}K^{1/2}n^{1/6+\eps/2}n^{\frac{1}{2k}}\leq \frac{1}{6}n^{1/3+\eps}$ since $k\geq 2/\eps$ and $n$ is sufficiently large in terms of $\eps$, $K$ and $k$. Thus, the number of homomorphic $2k$-cycles $(x_1,x_2,\dots,x_{2k})$ in $G$ such that $x_i\sim x_j$ or $x_ix_{i+1}\approx x_jx_{j+1}$ for some $i\neq j$ is at most $\frac{1}{2}\hom(C_{2k},G)$ which means that there are at least $\frac{1}{2}\hom(C_{2k},G)\geq 6^{-2k}n^{(1/3+\eps)2k}$ homomorphic $2k$-cycles $(x_1,x_2,\dots,x_{2k})$ in $G$ such that $x_i\not \sim x_j$ for all $i\neq j$ and $x_ix_{i+1}\not \approx x_{i+1}x_{i+2}$ for all $i$. This completes the proof of the lemma.
\end{proof}

\begin{proof}[Proof of Lemma \ref{lemma:few4cycles}]
	By Lemma \ref{lemma:tuples with small codegree} (applied with $4k$ in place of $k$), there is a set $\mathcal{C}$ of at least $6^{-8k}n^{(1/3+\eps)8k}$ homomorphic $8k$-cycles $(x_1,x_2,\dots,x_{8k})$ in $G$ such that $x_1,x_2,\dots,x_{8k}$ are distinct and $d(x_i,x_{i+2})\leq n^{2\eps}$ holds for each $1\leq i\leq 8k$.
	
	We claim that $\mathcal{C}$ is $\beta$-good for $\beta=n^{-\eps/2}$ with the choice of $s=n^{2\eps}$. Indeed, the first condition in Definition \ref{def:good} is trivial and the second one holds because for each $\vx\in \cC$ and $i\in \lbrack 8k\rbrack$, we have $d(x_i,x_{i+2})\leq n^{2\eps}$, so it remains to verify the third condition. But for any $1\leq i\leq 8k$ and $y_1,y_2,\dots,y_i,y_{i+3},y_{i+4},\dots,y_{8k}\in V(G)$ for which there exists $\vx\in \mathcal{C}$ such that $x_j=y_j$ for all $j\not \in \{i+1,i+2\}$, the vertices $y_{i+3},y_{i+4},\dots,y_{8k},y_1,y_2,\dots,y_i$ form a walk of length $8k-3$. This leaves at most $n\Delta(G)^{8k-3}$ choices for these vertices. But $n\Delta(G)^{8k-3}\leq n(Kn^{1/3+\eps})^{8k-3}=K^{8k-3}n^{(1/3+\eps)8k}n^{-3\eps}\leq \beta|\cC|/(16ks)$, so the proof is complete.
\end{proof}

\subsection{Many $4$-cycles: the proof of Lemma \ref{lemma:many4cycles}} \label{sec:manycycles}

When $G$ has many $4$-cycles, we can use them to find a vertex $v$ and a large collection of $8k$-cycles with the property that every second vertex on such an $8k$-cycle is a neighbour of $v$. We will then argue that one can choose such a collection to be $n^{-\eps/2}$-good. The next lemma shows how we choose $v$.
In what follows, we say that a set $\cD$ of triples is \emph{symmetric} if whenever $(x,y,z)\in \cD$, we also have $(z,y,x)\in \cD$.

\begin{lemma} \label{lemma:find 4cycles}
	Let $\eps,K>0$ be constants with $\eps<1/6$. Let $n$ be sufficiently large in terms of $\eps$ and $K$ and let $G$ be an $n$-vertex graph with $\Delta(G)\leq Kn^{1/3+\eps}$. Assume that $G$ has $q$ copies of $C_4$ such that every edge is contained in at most $\frac{96\log n}{n^{4/3+\eps}}q$ copies of $C_4$. Then there exist $v\in V(G)$, a positive real number $s$ and a symmetric set $\cD\subset V(G)^3$ of size at least $\frac{q}{n\log n}$ such that the following hold.
	
	\begin{enumerate}
		\item For each $(x,y,z)\in \cD$, the vertices $v,x,y,z$ form a $C_4$ (in this order).
		\item For each $(x,y,z)\in \cD$, $d(v,y),d(x,z)\leq s$.
		\item For any fixed $x\in V(G)$, there are at most $\frac{384(\log n)q}{n^{4/3+\eps}s}$ vertices $y\in V(G)$ for which there exists $z\in V(G)$ with $(x,y,z)\in \cD$. \label{cond:extension}
	\end{enumerate}
\end{lemma}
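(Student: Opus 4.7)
\medskip

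\noindent\textbf{Proof plan.} My plan is to locate a vertex $v$ around which many $C_4$'s pass, then dyadically pigeonhole on the codegrees to extract a symmetric family $\cD$ with the required three properties.

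First, since each unordered $C_4$ yields two ordered $C_4$'s at each of its four vertices (where an ordered $C_4$ at $v$ is a triple $(x,y,z)$ with $v,x,y,z$ distinct and $vx, xy, yz, zv \in E$), summing over vertices gives $\sum_v |\cC_v| = 8q$, with $\cC_v$ the set of such triples at $v$. Averaging, pick $v$ with $|\cC_v| \geq 8q/n$. The quantity $M(x,y,z) := \max(d(v,y), d(x,z))$ is symmetric under $(x,y,z) \mapsto (z,y,x)$ and lies in $[1, \Delta(G)] \subseteq [1, Kn^{1/3+\eps}]$, so it takes only $O(\log n)$ dyadic values. By pigeonhole, there is a power of $2$, call it $s$, such that $\cD := \{(x,y,z) \in \cC_v : M(x,y,z) \in (s/2, s]\}$ has size at least $q/(n \log n)$. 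Then $\cD$ is symmetric, condition~1 is inherited from $\cC_v$, and condition~2 follows from $M \leq s$.

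For condition~3, fix $x$ and set $A_x := \{y : \exists z,\ (x,y,z) \in \cD\}$. A key input, obtained by double-counting walks $v \to x \to w \to z \to v$, is that $\sum_{z \in N(v) \setminus \{x\}} d(x,z)$ equals the number of $C_4$'s through the edge $vx$ plus $d(v)-1$, which by the edge-$C_4$ hypothesis is at most roughly $\mathcal{E} := \frac{96(\log n) q}{n^{4/3+\eps}}$ (using $\Delta \ll \mathcal{E}$, which follows from $q \geq n^{5/3+3\eps}/(96 \log n)$). Split $A_x = A_x^{(1)} \cup A_x^{(2)}$ according to whether some witness $(x,y,z) \in \cD$ has $d(v,y) > s/2$ or $d(v,y) \leq s/2$ (forcing $d(x,z) > s/2$ in the latter). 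Markov applied to $\sum_{y \in N(x) \setminus \{v\}} d(v,y) = d(x) + \sum_{z \in N(v) \setminus \{x\}} d(x,z) - d(v) \lesssim \mathcal{E}$ then yields $|A_x^{(1)}| \lesssim \mathcal{E}/s$, matching the target up to constants.

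The case $A_x^{(2)}$ is the main obstacle: the direct union bound $|A_x^{(2)}| \leq \sum_{z \in N(v):\, d(x,z) > s/2} d(x,z) \lesssim \mathcal{E}$ misses the crucial factor of $1/s$. To recover it, my plan is to prune $\cD$ by discarding any triple $(x,y,z)$ for which the pair $(x,y)$---or, by symmetry, the pair $(z,y)$---admits fewer than $\Omega(s)$ distinct extensions in $\cD$. After such pruning, each $y \in A_x$ has $\Omega(s)$ witnesses $z$, so double-counting pairs $(y,z)$---bounded below by $|A_x| \cdot \Omega(s)$ and above by $\sum_{z \in N(v) \setminus \{x\}} d(x,z) \lesssim \mathcal{E}$---gives the desired $|A_x| \lesssim \mathcal{E}/s$. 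The delicate step, and the crux of the lemma, is verifying that this pruning discards only a small fraction of $|\cD|$; I expect this to follow because the dyadic choice of $s$ matches the typical scale of $d(v,y)$ in $\cC_v$, so that a typical $(x,y)$ pair already admits $\sim s$ distinct $z$-extensions.
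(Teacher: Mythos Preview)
Your overall plan—fix a popular vertex $v$, then dyadically pigeonhole on codegrees—is the right one, but your implementation has a genuine gap in the $A_x^{(2)}$ case, and the correct fix is simpler than the pruning you propose.

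By pigeonholing on $M(x,y,z)=\max(d(v,y),d(x,z))$ you keep $\cD$ symmetric automatically, but the price is that you only know $\max(d(v,y),d(x,z))>s/2$; nothing prevents $d(v,y)$ from being as small as $2$ while $d(x,z)\approx s$. Your pruning step requires that for most $(x,y,z)\in\cD$ the pair $(x,y)$ admits $\Omega(s)$ extensions inside $\cD$, yet the number of such extensions is at most $d(v,y)-1$, which may equal $1$. Your heuristic that ``$s$ matches the typical scale of $d(v,y)$'' is precisely the point at issue: $s$ was chosen to match the scale of $M$, and in the $A_x^{(2)}$ regime $M=d(x,z)$, not $d(v,y)$. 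Nothing in your argument bounds the fraction of $\cD$ lying in this regime, so the pruning could in principle discard essentially all of $\cD$ (imagine every $y$ opposite $v$ has $d(v,y)=2$ while every pair in $N(v)$ has codegree about $s$).

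The paper sidesteps this with a one-line device: \emph{before} choosing $v$, restrict to ordered $4$-tuples $(v,x,y,z)$ with $d(x,z)\le d(v,y)$; this costs at most a constant factor, since every ordered $C_4$ satisfies one of the two inequalities. One then pigeonholes on $d(v,y)$ alone, obtaining a set $\cD'$ in which $s/2\le d(v,y)<s$ and $d(x,z)\le d(v,y)<s$ for every triple. Now \emph{every} $y\in A_x$ has $d(v,y)\ge s/2$, so your $A_x^{(1)}$ argument applies to all of $A_x$: each such $y$ yields at least $s/4$ genuine $C_4$'s through the edge $vx$ (not merely extensions in $\cD'$), and the edge-$C_4$ hypothesis gives $|A_x|\le 4\mathcal{E}/s$ directly. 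Symmetrize $\cD'$ at the very end; this affects none of the three conditions. The missing idea, then, is to break the $(v,y)\leftrightarrow(x,z)$ symmetry first so that it is $d(v,y)$, not merely the maximum, that sits at the chosen dyadic scale.
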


\begin{proof}
	Note that there exist at least $q$ quadruples $(v,x,y,z)\in V(G)^4$ such that $v,x,y,z$ form a $4$-cycle (in this order) and $d(x,z)\leq d(v,y)$. Hence, there is some $v\in V(G)$ for which there is a set $\cD_0$ of at least $q/n$ triples $(x,y,z)\in V(G)^3$ such that $v,x,y,z$ form a $4$-cycle and $d(x,z)\leq d(v,y)$. For any $(x,y,z)\in \cD_0$ there is some integer $1\leq i\leq \lceil \log_2 (Kn^{1/3+\eps})\rceil+1$ such that $2^{i-1}\leq d(v,y)< 2^{i}$. Since $\lceil\log_2 (Kn^{1/3+\eps})\rceil+1\leq \log n$, by the pigeon hole principle there exists some $1\leq i\leq \log n$ such that the number of $(x,y,z)\in \cD_0$ with $2^{i-1}\leq d(v,y)< 2^{i}$ is at least $|\cD_0|/\log n$. Write $\cD'$ for the set of these triples and let $s=2^{i}$. Clearly for any $(x,y,z)\in \cD'$, the vertices $v,x,y,z$ form a $4$-cycle and satisfy $d(x,z)\leq d(v,y)\leq s$. Let $\cD=\{(x,y,z)\in V(G)^3: (x,y,z)\in \cD' \text{ or } (z,y,x)\in \cD'\}$. Now $|\cD|\geq |\cD'|\geq \frac{q}{n\log n}$, $\cD$ is symmetric and satisfies the first two conditions in the lemma. Furthermore, for any $(x,y,z)\in \cD$, we have $d(v,y)\geq \max(s/2,2)$. Therefore, the vertices $v,x,y$ can be extended in at least $s/4$ many ways to a $4$-cycle $vxyz$. Thus, for any $x\in V(G)$, there are at most $\frac{96\log n}{n^{4/3+\eps}}q/(s/4)$ vertices $y\in V(G)$ for which there exists $z\in V(G)$ with $(x,y,z)\in \cD$ (since there are at least $s/4$ copies of $C_4$ containing $vx$ corresponding to each such vertex $y$ and $vx$ is an at most $\frac{96\log n}{n^{4/3+\eps}}q$ copies of $C_4$). This shows that the third condition in the lemma is satisfied and completes the proof.
\end{proof}

\begin{lemma} \label{lemma:find longer cycle}
	Let $\eps,K>0$ be constants with $\eps<1/6$ and let $k\geq 2/\eps$. Let $n$ be sufficiently large in terms of $\eps$, $K$ and $k$. Let $T$ be a set of size at most $Kn^{1/3+\eps}$ and let $R$ be a set of size $n$. Let $\cD\subset T\times R\times T$ be a symmetric set of size at least $n^{2/3+5\eps/2}$ such that for any $x,z\in T$ there are at most $Kn^{1/3+\eps}$ elements $y\in R$ with $(x,y,z)\in \cD$. Then there are at least $|\cD|^{2k}/n^{(1/3+\eps)2k+\eps/8}$ tuples $(x_1,y_1,x_2,y_2,\dots,x_{2k},y_{2k})\in T\times R\times \dots \times T\times R$ of distinct vertices with the property that $(x_i,y_i,x_{i+1})\in \cD$ holds for all $i\in \lbrack 2k\rbrack$.
\end{lemma}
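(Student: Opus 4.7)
The plan is to extract a ``nearly-regular'' sub-structure of $\cD$ by a dyadic pigeonhole, apply a Sidorenko-type count via Lemma~\ref{lemma:homcycles} to lower-bound the number of labelled walks, and then subtract the walks with coincident entries using the conflict lemmas from \cite{janzerrainbow}.

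First, by a dyadic pigeonhole on the multiplicity $f(x,z) := |\{y \in R : (x,y,z) \in \cD\}|$ (which satisfies $f \leq Kn^{1/3+\eps}$), there exists $\mu^* \in [1, Kn^{1/3+\eps}]$ such that the subset $\cD^* = \{(x,y,z) \in \cD : f(x,z) \in [\mu^*, 2\mu^*)\}$ has $|\cD^*| \geq |\cD|/(2\log n)$. Let $G^*$ be the simple symmetric support graph of $\cD^*$ on $T$. Then $G^*$ has average degree $d \geq |\cD|/(4\mu^* |T|\log n)$, so Lemma~\ref{lemma:blowupbiregular} applied to $G^*$ yields an almost-biregular bipartite subgraph $\tilde G$ with parts $X_1, X_2 \subseteq T$ and parameters $D_1, D_2 \geq d/4$. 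Lemma~\ref{lemma:homcycles} then gives $\hom(C_{2k}, \tilde G) \geq (D_1 D_2)^k / (256 \log^2 n)^{2k}$. Each homomorphic $2k$-cycle in $\tilde G$ lifts to at least $\mu^{*2k}$ labelled $\cD^*$-walks $(x_1, y_1, \ldots, x_{2k}, y_{2k})$ by choosing $y_i \in \{y : (x_i, y, x_{i+1}) \in \cD^*\}$ (a set of size at least $\mu^*$). Substituting the bounds on $d$ and $|T|$, the total number of lifts is at least $|\cD|^{2k}/n^{(1/3+\eps)2k + \eps/16}$ for $n$ sufficiently large.

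It remains to show that the number of ``bad'' lifts — those with a coincidence among the $4k$ entries — is at most half the total. I would split this into three cases. For $x_i = x_j$, apply Lemma~\ref{lemma:bipartite with vertices} to $\tilde G$ with the equality relation ($s_1 = s_2 = 1$, $M = \max(D_1, D_2)$) and multiply by $(2\mu^*)^{2k}$ label lifts per cycle; this gives a ratio of $n^{-\Omega(\eps)}$ (using $d \gtrsim n^{\eps/2}/\mathrm{polylog}(n)$, $|T|^{1/(2k)} \leq n^{\eps/8}$, and $|\cD| \geq n^{2/3+5\eps/2}$). For $y_i = y_j$, apply Lemma~\ref{lemma:simple with edges} to $\tilde G$ with the edge relation ``these two edges share a label in $\cD^*$''. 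The key parameter $s$ is at most $2\mu^* K n^{1/3+\eps}$, since a fixed edge has at most $2\mu^*$ labels and each such label has at most $Kn^{1/3+\eps}$ ``extensions'' at any other vertex; a similar computation produces a ratio of $n^{-\Omega(\eps)}$.

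The main obstacle is the third case, $x_i = y_j$, a vertex--label coincidence that is not directly covered by either conflict lemma. My plan here is to fix positions $i,j$ and observe that $x_i = y_j$ forces a triple $(x_j, x_i, x_{j+1}) \in \cD^*$; decomposing the closed $\tilde G$-walk as (path of length $j-i$ from $x_i$ to $x_j$)~$+$~(edge $(x_j, x_{j+1})$)~$+$~(path of length $2k-j+i-1$ from $x_{j+1}$ back to $x_i$), one can bound the number of walks subject to these constraints by a two-step Cauchy--Schwarz controlled by the spectral estimate $(\tilde A^{t})_{vv} \leq \lambda_1(\tilde G)^t \leq (D_1D_2)^{t/2}$ and by $|\cD^*| \leq |\cD|$. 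The delicate point is to use the label-multiplicity $f^*(x_j, x_{j+1}) \leq 2\mu^*$ (which caps the number of choices of $x_i$ for each fixed edge) to avoid losing a factor of $|T|$; done correctly, this yields a ratio bounded by $n^{-\Omega(\eps)}$ as well. Combining the three coincidence bounds with the Sidorenko lower bound, the count of distinct-entry tuples is at least $|\cD|^{2k}/n^{(1/3+\eps)2k + \eps/8}$, as claimed.
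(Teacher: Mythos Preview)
Your dyadic pigeonhole on the multiplicity $f(x,z)$ and the passage to a support graph on $T$ is exactly what the paper does. From that point on, however, the paper takes a much shorter route that sidesteps your three-case subtraction entirely. It observes (as you implicitly do) that $\mu^*\geq |\cD^*|/(2|T|^2)\gtrsim n^{\eps/2}/\log n\gg k$, and then applies the Morris--Saxton supersaturation lemma (Lemma~\ref{lemma:supersaturation}) to the support graph $F$ to get at least $c\,|T|^{-2k}\bigl(|\cD|/(4\mu^*\log n)\bigr)^{2k}$ \emph{genuine} copies of $C_{2k}$. Since each genuine cycle already has $x_1,\dots,x_{2k}$ distinct, one then picks the labels $y_1,\dots,y_{2k}$ greedily: there are at least $\mu^*$ choices for each $y_i$, and avoiding the at most $4k-1$ previously chosen vertices still leaves $\geq \mu^*/2$ choices. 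So distinctness of all $4k$ entries comes for free, and no conflict lemma is needed here at all.

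Your alternative route via Lemmas~\ref{lemma:blowupbiregular}--\ref{lemma:homcycles} plus subtraction is in principle a valid strategy for Case~1 ($x_i=x_j$) and Case~2 ($y_i=y_j$), but Case~3 ($x_i=y_j$) is a genuine gap in your sketch. Writing $p=j-i$ and $q=2k-1-p$, the quantity you need to bound is
\[
\sum_{(a,b)\in E(\tilde G)}\ \sum_{c\in L(a,b)} W_p(c,a)\,W_q(b,c),
\]
where $|L(a,b)|<2\mu^*$. The spectral inequality $(\tilde A^{t})_{uv}\le \lambda_1^{t}\le \hom(C_{2k},\tilde G)^{t/(2k)}$ that you invoke only yields $\sum\le 2e(\tilde G)\cdot 2\mu^*\cdot \hom^{1-1/(2k)}$. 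After multiplying by $(2\mu^*)^{2k-1}$ for the remaining labels and dividing by the good count $(\mu^*)^{2k}\hom$, the ratio is of order $e(\tilde G)/\hom^{1/(2k)}$, which by Sidorenko is of order $|T|$ --- far from $n^{-\Omega(\eps)}$. The ``avoid losing a factor of $|T|$'' step you flag as delicate is exactly where the argument breaks, and neither the cap $|L(a,b)|\le 2\mu^*$ nor the diagonal spectral bound $(\tilde A^{t})_{vv}\le\lambda_1^t$ recovers it, because the constraint ties $c$ to the \emph{edge} $(a,b)$ rather than to either endpoint separately.

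The clean fix is not to repair Case~3 but to drop the ``lift first, subtract later'' plan: once you have many non-degenerate $x$-cycles (via Morris--Saxton, or even via your Case~1 bound), choose the $y_i$ greedily using $\mu^*\gg k$. That is precisely the paper's argument.
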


\begin{proof}
	For each $x,z\in T$, write $h(x,z)$ for the number of $y\in R$ such that $(x,y,z)\in \cD$. By the assumption in the lemma, we have $h(x,z)\leq Kn^{1/3+\eps}$. Hence, for each $(x,y,z)\in \cD$, there exists some integer $1\leq i\leq \lceil\log_2(Kn^{1/3+\eps})\rceil+1$ such that $2^{i-1}\leq h(x,z)<2^i$. Since $\lceil\log_2(Kn^{1/3+\eps})\rceil+1\leq \log n$, it follows by the pigeon hole principle that there exists $1\leq s\leq Kn^{1/3+\eps}$ such that for at least $|\cD|/\log n$ triples $(x,y,z)\in \cD$, we have $s\leq h(x,z)<2s$. This implies that there are at least $\frac{|\cD|}{2s\log n}$ pairs $(x,z)\in T^2$ such that $s\leq h(x,z)<2s$. In particular, $\frac{|\cD|}{2s\log n}\leq |T|^2\leq K^2n^{2/3+2\eps}$, so $s\geq 100k$ (by a crude estimate). Define a graph $F$ on vertex set $T$ in which $x$ is a neighbour of $z$ if and only if $x\neq z$ and $h(x,z)\geq s$. Observe that this is well-defined since $h(x,z)=h(z,x)$ by assumption that $\mathcal{D}$ is symmetric. Moreover, the number of edges in $F$ is at least $\frac{|\cD|}{4s\log n}\geq \frac{n^{2/3+5\eps/2}}{4Kn^{1/3+\eps}\log n}=\frac{n^{1/3+3\eps/2}}{4K\log n}\geq C|T|^{1+1/k}$, where $C=C(k)$ is the constant from Lemma \ref{lemma:supersaturation}. Hence, by Lemma \ref{lemma:supersaturation}, there are at least $c|T|^{-2k}(|\cD|/(4s\log n))^{2k}$ copies of $C_{2k}$ in $F$. Note that for any $2k$-cycle $x_1x_2\dots x_{2k}$ in $F$, there are at least $(s/2)^{2k}$ tuples $(x_1,y_1,x_2,y_2,\dots,x_{2k},y_{2k})\in T\times R\times \dots \times T\times R$ of distinct vertices with the property that $(x_i,y_i,x_{i+1})\in \cD$ holds for all $i\in \lbrack 2k\rbrack$. (Indeed, there are at least $s\geq 100k$ choices for each $y_i$ such that $(x_i,y_i,x_{i+1})\in \cD$ and at least $s/2$ of these must be distinct from all previously chosen vertices.)
	
	It follows that there are at least $c|T|^{-2k}(|\cD|/(8\log n))^{2k}$ tuples $(x_1,y_1,x_2,y_2,\dots,x_{2k},y_{2k})\in T\times R\times \dots \times T\times R$ of distinct vertices with the property that $(x_i,y_i,x_{i+1})\in \cD$ holds for all $i\in \lbrack 2k\rbrack$. But $c|T|^{-2k}(|\cD|/(8\log n))^{2k}\geq |\cD|^{2k}/n^{(1/3+\eps)2k+\eps/8}$, so the proof is complete.
\end{proof}

\begin{proof}[Proof of Lemma \ref{lemma:many4cycles}]
	By Lemma \ref{lemma:find 4cycles}, we can choose $v\in V(G)$, a positive real number $s$ and a symmetric set $\cD\subset V(G)^3$ of size at least $\frac{q}{n\log n}$ such that the three properties from that lemma hold. Note that $|\cD|\geq n^{2/3+5\eps/2}$ and $\Delta(G)\leq Kn^{1/3+\eps}$, so we can apply Lemma~\ref{lemma:find longer cycle} with $T=N(v)$, $R=V(G)$ and $2k$ in place of $k$. It follows that there is a set $\mathcal{C}$ of at least $|\cD|^{4k}/n^{(1/3+\eps)4k+\eps/8}$ tuples $(x_1,x_2,\dots,x_{8k})\in V(G)^{8k}$ of distinct vertices such that $(x_{2i-1},x_{2i},x_{2i+1})\in \cD$ for all $i\in \lbrack 4k\rbrack$.
	
	We shall now prove that $\cC$ is $\beta$-good for $\beta=n^{-\eps/2}$. The first condition in Definition \ref{def:good} is clear. For the second one, note that $(x_{2i-1},x_{2i},x_{2i+1})\in \cD$ implies that $x_{2i}\in N(x_{2i-1})\cap N(x_{2i+1})$, $d(x_{2i-1},x_{2i+1})\leq s$, $x_{2i-1}\in N(v)\cap N(x_{2i})$ and $d(v,x_{2i})\leq s$. These together imply the second condition in Definition \ref{def:good}. It remains to verify the third condition. Assume first that $z_1,z_2,\dots,z_{8k-2}\in V(G)$ such that there exists $\vx\in \cC$ with $x_j=z_j$ for all $j\not \in \{8k-1,8k\}$. Then $(z_1,z_2,z_3)\in \cD$, so $z_1\in N(v)$, leaving at most $\Delta(G)$ possibilities for $z_1$. Moreover, for any $1\leq i\leq 4k-2$, the condition $(z_{2i-1},z_{2i},z_{2i+1})\in \cD$ implies that $v,z_{2i-1},z_{2i},z_{2i+1}$ form a $4$-cycle. Since any edge is in at most $\frac{96\log n}{n^{4/3+\eps}}q$ copies of $C_4$, it follows that given $z_1,z_2,\dots,z_{2i-1}$, there are at most $\frac{96\log n}{n^{4/3+\eps}}q$ choices for $z_{2i}$ and $z_{2i+1}$. Finally, $(z_{8k-3},z_{8k-2},x_{8k-1})\in \cD$, so by condition \ref{cond:extension} from Lemma \ref{lemma:find 4cycles}, given $z_{8k-3}$ there are at most $\frac{384(\log n)q}{n^{4/3+\eps}s}$ choices for $z_{8k-2}$. This means that overall there are at most $\Delta(G)\cdot (\frac{96\log n}{n^{4/3+\eps}}q)^{4k-2}\cdot \frac{384(\log n)q}{n^{4/3+\eps}s}=\frac{4\Delta(G)}{s}(\frac{96\log n}{n^{4/3+\eps}}q)^{4k-1}$ possibilities for $z_1,z_2,\dots,z_{8k-2}$. On the other hand,
	$$|\cC|\geq |\cD|^{4k}/n^{(1/3+\eps)4k+\eps/8}\geq n^{-\eps/8}\left(\frac{q}{n^{4/3+\eps}\log n}\right)^{4k}.$$
	Since $\Delta(G)\leq Kn^{1/3+\eps}$ and $q\geq \frac{n^{5/3+3\eps}}{96\log n}$, we have
		$$\frac{4\Delta(G)}{s}\left(\frac{96\log n}{n^{4/3+\eps}}q\right)^{4k-1}\leq \frac{\beta|\cC|}{16ks}.$$
	We have therefore proved that there are at most $\beta|\cC|/(16ks)$ choices for the vertices $z_1,z_2,\dots,z_{8k-2}\in V(G)$ such that there exists $\vx\in \cC$ with $x_j=z_j$ for all $j\not \in \{8k-1,8k\}$. By symmetry, this implies that for any $i\in \lbrack 8k\rbrack$ there are at most $\beta|\cC|/(16ks)$ choices for the vertices $z_1,z_2,\dots,z_i,z_{i+3},z_{i+4},\dots,z_{8k}\in V(G)$ such that there exists $\vx\in \cC$ with $x_j=z_j$ for all $j\not \in \{i+1,i+2\}$. Hence, the third condition in Definition \ref{def:good} is satisfied and $\cC$ is indeed $n^{-\eps/2}$-good.
\end{proof}

\section{Concluding remarks}

In the introduction, we mentioned the conjecture of Erd\H os and Simonovits that for any graph~$H$ of minimum degree at least $r$, there exists $\eps>0$ such that $\ex(n,H)=\Omega(n^{2-1/(r-1)+\eps})$. This conjecture was disproved in \cite{janzerrainbow}, where it was shown that for any even $r\geq 2$ and $\eps>0$ there is an $r$-regular graph $H$ such that $\ex(n,H)=O(n^{2-2/r+\eps})$. Moreover, we conjectured there that the same result should hold for odd values of $r$ too.

\begin{conjecture}[\cite{janzerrainbow}] \label{con:regturan}
	Let $r\geq 3$ be odd and let $\eps>0$. Then there exists an $r$-regular graph~$H$ such that $\ex(n,H)=O(n^{2-2/r+\eps})$.
\end{conjecture}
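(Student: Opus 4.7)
The strategy is to generalize the construction of $H_{k,\ell}$ by replacing its single cycle direction $j\in [2\ell]$ with $t$ nested cycle directions. For $r=2t+1$, I would take a vertex set indexed by tuples $(i,j_1,j_2,\ldots,j_t)$ with $1\leq i\leq 4k$ and $1\leq j_s\leq 2\ell_s$, put a matching of size $2k$ on each fibre $\{(i,\mathbf{j}):i\in [4k]\}$ (contributing degree $1$ per vertex), and model the inter-fibre edges on the $2t$-regular construction used to disprove the even case in \cite{janzerrainbow}, so that each vertex has degree $2$ in each of the $t$ cycle directions. This produces a $(2t+1)$-regular bipartite graph $\tilde H$, which is the candidate counterexample.

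For an almost-regular $n$-vertex host graph $G$ with $e(G)\geq n^{2-2/r+\eps}$ edges, I would iterate the three-step framework of this paper through $t$ levels. Level $1$ should produce, essentially as in Definitions \ref{def:good}--\ref{def:nice}, a $\beta_1$-nice family of $8k$-cycles in $G$ whose structure reflects the innermost cycle direction; this family is interpreted as the edge set of an auxiliary graph $\mathcal{G}_1$ on $2k$-matchings. At level $s+1$, one would define $\beta_{s+1}$-good and $\beta_{s+1}$-nice families of cycles in $\mathcal{G}_s$ (equivalently, of long cyclic arrangements of matchings in $G$) and repeat, using a supersaturation argument analogous to the one sketched in Subsection \ref{sec:overview} combined with Lemmas \ref{lemma:sidorenko} and \ref{lemma:supersaturation}. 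After $t$ levels, a suitable analogue of Lemma \ref{lemma:nice implies subgraph}, invoking the rainbow cycle machinery of Lemma \ref{lemma:proportion of neighbours} inside $\mathcal{G}_t$, would find a cycle all of whose coordinate vertices in $G$ are distinct, producing a copy of $\tilde H$ in $G$.

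The hardest part will be propagating the good/nice property through the $t$ levels. The crucial inputs in the present paper are the assumption (obtained via Lemma \ref{lemma:clean C4s}) that each edge of $G$ lies in few copies of $C_4$, together with the dichotomy between the few-$C_4$ regime (handled via codegree control in Lemma \ref{lemma:tuples with small codegree}) and the many-$C_4$ regime (handled by pivoting around a vertex $v$ in Lemma \ref{lemma:find 4cycles}). At level $s\geq 2$, the analogue of $C_4$ is a more complex $s$-level substructure, and the pivot technique that works at level $1$ has no immediate counterpart; a new cleaning/regularization lemma tailored to higher-order substructures appears to be required. Simultaneously, one must arrange the parameters so that the multiplicative slack accrued at each of the $t$ levels sums to only $O(\eps)$ in the final exponent, which seems delicate but feasible once the per-level cleaning lemma is in place.
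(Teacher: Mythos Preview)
The statement you are addressing is \emph{Conjecture}~\ref{con:regturan}, not a theorem: the paper does not prove it. The paper explicitly presents it as an open problem and remarks only that ``The present paper proves this conjecture for $r=3$'' via Theorem~\ref{thm:turan of Hkl}. There is therefore no proof in the paper to compare your proposal against.

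What you have written is not a proof but a research programme, and you yourself identify the gap precisely: the dichotomy between the few-$C_4$ and many-$C_4$ regimes, and in particular the pivot-around-a-vertex argument of Lemma~\ref{lemma:find 4cycles}, is specific to genuine $4$-cycles in an almost-regular host graph $G$. At level $s\geq 2$ the objects playing the role of edges are already large configurations in $G$, and there is no reason the analogue of ``every edge lies in few $C_4$'s'' (or its failure) should admit the same treatment. Your proposal acknowledges that ``a new cleaning/regularization lemma tailored to higher-order substructures appears to be required''; that is exactly the missing idea, and without it the iteration cannot get past level $1$. In other words, the proposal reduces the conjecture to another problem of comparable difficulty rather than resolving it.

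A secondary issue is the construction itself: you assert that nesting $t$ cycle directions on top of the $2k$-matching fibres yields a $(2t+1)$-regular bipartite graph, but this needs to be checked carefully. The $r=3$ graph $H_{k,\ell}$ has special boundary behaviour at $i=1$ and $i=4k$ (the edges $x_{1,j}x_{1,j+1}$ and $x_{4k,j}x_{4k,j+1}$) that makes every vertex $3$-regular; the correct higher-dimensional analogue is not immediate, and the even-$r$ construction from \cite{janzerrainbow} does not include the matching layer, so simply superimposing the two is not guaranteed to give a regular graph.
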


The present paper proves this conjecture for $r=3$. We remark that the probabilistic deletion method shows that for any graph $H$ with minimum degree at least $r$, there is $\delta>0$ such that $\ex(n,H)=\Omega(n^{2-2/r+\delta})$. This implies that, if true, Conjecture \ref{con:regturan} is tight for every~$r$.

Nearly all bipartite graphs whose Tur\'an number is known (up to a multiplicative constant) are large \emph{rooted powers} of trees (see, e.g., \cite{JMY18,KKL18,CJL19,JQ19,janzerlongerbipsub,JQ19new,JJM20}). For a tree $T$ and a positive integer $s$, let us write $s\ast T$ for the graph obtained by taking $s$ vertex-disjoint copies of $T$ and identifying the $s$ copies of each leaf. We will now show that large rooted powers of trees cannot be used to prove Conjecture \ref{con:regturan} and that in fact they cannot even serve as counterexamples to the above conjecture of Erd\H os and Simonovits for any $r$. A powerful general result of Bukh and Conlon \cite{BC17} implies that for any tree $T$ there exists $s_0$ such that for all $s\geq s_0$, we have $\ex(n,s\ast T)=\Omega\left(n^{2-\frac{v(T)-\ell(T)}{e(T)}}\right)$, where $v(T)$, $e(T)$ and $\ell(T)$ are the number of vertices, edges and leaves in $T$, respectively. If $s\ast T$ has minimum degree at least $r$, then every non-leaf in $T$ has degree at least $r$ in $T$. However, an easy computation shows that in this case $\frac{v(T)-\ell(T)}{e(T)}\leq \frac{\ell(T)-2}{(r-1)(\ell(T)-1)-1}$. When $r\geq 3$, this is less than $\frac{1}{r-1}$, so for any tree $T$, if $s$ is sufficiently large and $s\ast T$ has minimum degree at least $r$, then $\ex(n,s\ast T)=\Omega(n^{2-1/(r-1)+\eps})$ for some $\eps>0$ depending on $T$.

\bibliographystyle{abbrv}
\bibliography{bibliography}

\begin{thebibliography}{10}

\bibitem{AKS03}
N.~Alon, M.~Krivelevich, and B.~Sudakov.
\newblock Tur{\'a}n numbers of bipartite graphs and related {R}amsey-type
  questions.
\newblock {\em Combinatorics, Probability and Computing}, 12:477--494, 2003.

\bibitem{BC17}
B.~Bukh and D.~Conlon.
\newblock Rational exponents in extremal graph theory.
\newblock {\em J. Eur. Math. Soc.}, 20:1747--1757, 2018.

\bibitem{CJL19}
D.~Conlon, O.~Janzer, and J.~Lee.
\newblock More on the extremal number of subdivisions.
\newblock {\em Combinatorica}, to appear.

\bibitem{Erd81new}
P.~{Erd\H os}.
\newblock Problems and results in graph theory.
\newblock {\em The theory and applications of graphs (Kalamazoo, MI, 1980)},
  pages 331--341, 1981.

\bibitem{ESonline}
P.~{Erd\H os} and M.~Simonovits.
\newblock Lower bound for {Tur\'an} number for bipartite non-degenerate graphs.
\newblock
  \url{http://www.math.ucsd.edu/~erdosproblems/erdos/newproblems/TuranNondegenerate.html}.

\bibitem{ESonlinenew}
P.~{Erd\H os} and M.~Simonovits.
\newblock {Tur\'an} number for graphs with subgraphs of minimum degree $>2$.
\newblock
  \url{http://www.math.ucsd.edu/~erdosproblems/erdos/newproblems/TuranDegreeConstraint.html}.

\bibitem{Erd67}
P.~Erd\H{o}s.
\newblock Some recent results on extremal problems in graph theory. {R}esults.
\newblock {\em Theory of Graphs (Internat. Sympos., Rome, 1966)}, pages
  117--123, 1967.

\bibitem{Erd90}
P.~Erd\H{o}s.
\newblock Some of my old and new combinatorial problems.
\newblock {\em Paths, flows, and VLSI-layout (Bonn, 1988), Algorithms Combin.},
  9:35--45, 1990.

\bibitem{ESi66}
P.~Erd\H{o}s and M.~Simonovits.
\newblock A limit theorem in graph theory.
\newblock {\em Studia Sci. Math. Hungar.}, 1:51--57, 1966.

\bibitem{ES46}
P.~Erd\H{o}s and A.~H. Stone.
\newblock On the structure of linear graphs.
\newblock {\em Bull. Amer. Math. Soc.}, 52:1087--1091, 1946.

\bibitem{Erd83}
P.~Erd{\H{o}}s.
\newblock Extremal problems in number theory, combinatorics and geometry.
\newblock In {\em Proc. International Congress of Mathematicians (Warsaw,
  1983)}, 1983.

\bibitem{Erd93}
P.~Erd{\H{o}}s.
\newblock Some of my favorite solved and unsolved problems in graph theory.
\newblock {\em Quaestiones Mathematicae}, 16(3):333--350, 1993.

\bibitem{ES70}
P.~Erd{\H{o}}s and M.~Simonovits.
\newblock Some extremal problems in graph theory.
\newblock In {\em Combinatorial theory and its applications, I}, (Proc.
  Colloq., Balatonf{\"{u}}red, 1969), pages 377--390. North-Holland, Amsterdam,
  1970.

\bibitem{Fu91}
Z.~F{\"u}redi.
\newblock On a {T}ur{\'a}n type problem of {E}rd{\H{o}}s.
\newblock {\em Combinatorica}, 11(1):75--79, 1991.

\bibitem{janzerlongerbipsub}
O.~Janzer.
\newblock The extremal number of the subdivisions of the complete bipartite
  graph.
\newblock {\em SIAM Journal on Discrete Mathematics}, 34(1):241--250, 2020.

\bibitem{janzerrainbow}
O.~Janzer.
\newblock Rainbow {Tur\'an} number of even cycles, repeated patterns and
  blow-ups of cycles.
\newblock {\em Israel Journal of Mathematics}, to appear.

\bibitem{JJM20}
T.~Jiang, Z.~Jiang, and J.~Ma.
\newblock Negligible obstructions and {Tur\'an} exponents.
\newblock {\em arXiv preprint arXiv:2007.02975}, 2020.

\bibitem{JMY18}
T.~Jiang, J.~Ma, and L.~Yepremyan.
\newblock On {T}ur\'an exponents of bipartite graphs.
\newblock {\em Combinatorics, Probability and Computing}, to appear.

\bibitem{JQ19new}
T.~Jiang and Y.~Qiu.
\newblock Many {Tur\'an} exponents via subdivisions.
\newblock {\em arXiv preprint arXiv:1908.02385}, 2019.

\bibitem{JQ19}
T.~Jiang and Y.~Qiu.
\newblock Tur{\'a}n numbers of bipartite subdivisions.
\newblock {\em SIAM Journal on Discrete Mathematics}, 34(1):556--570, 2020.

\bibitem{KKL18}
D.~Y. Kang, J.~Kim, and H.~Liu.
\newblock On the rational tur{\'a}n exponents conjecture.
\newblock {\em Journal of Combinatorial Theory, Series B}, 148:149--172, 2021.

\bibitem{MS16}
R.~Morris and D.~Saxton.
\newblock The number of {$C_{2\ell}$}-free graphs.
\newblock {\em Advances in Mathematics}, 298:534--580, 2016.

\bibitem{Si92}
A.~F. Sidorenko.
\newblock Inequalities for functionals generated by bipartite graphs.
\newblock {\em Discrete Mathematics and Applications}, 2(5):489--504, 1992.

\end{thebibliography}
	
\end{document}